\newcommand{\ad}{addr}
\newcommand{\sm}{\ensuremath{{\rm Sim}}} 
\newcommand{\id}{\ensuremath{\mathrm{id}}} 
\newcommand{\co}{\ensuremath{\colon}} 
\theoremstyle{plain}
\newtheorem{theorem}{Theorem}[section]
\newtheorem{lemma}[theorem]{Lemma}
\newtheorem{sublemma}[theorem]{Sublemma}
\newtheorem{proposition}[theorem]{Proposition}
\newtheorem*{main}{Main Theorem}
\theoremstyle{remark}
\theoremstyle{definition}
\newtheorem{convention}[theorem]{Convention}
\newtheorem{remark}[theorem]{Remark}
\newtheorem{conjecture}[theorem]{Conjecture}
\newtheorem{definition}[theorem]{Definition}
\theoremstyle{remark}
\begin{document}

\begin{title}
[Groups with context-free co-word problem]
{Local similarity groups with context-free co-word problem} 
\author[D.Farley]{Daniel Farley}
      \address{Department of Mathematics\\
               Miami University \\
               Oxford, OH 45056\\
               }
      \email{farleyds@muohio.edu}

\end{title}

\begin{abstract}
Let $G$ be a group, and let $S$ be a finite subset of $G$ that generates $G$ as a monoid. The \emph{co-word problem}
is the collection of words in the free monoid $S^{\ast}$ that represent non-trivial elements of $G$. 

A current conjecture, based originally on a conjecture of Lehnert and modified into its current form by Bleak, Matucci, and Neuh\"{o}ffer, says that Thompson's group $V$ is a universal group with context-free co-word problem. In other words, it is conjectured that a group has a context-free co-word problem exactly if it is a finitely generated subgroup of $V$.

Hughes introduced the class $\mathcal{FSS}$ of groups that are determined by finite similarity structures. An $\mathcal{FSS}$ group acts by local similarities on a compact ultrametric space. Thompson's group $V$ is a representative example, but there are many others. 

We show that $\mathcal{FSS}$ groups  have context-free co-word problem under a minimal additional hypothesis.
As a result, we can specify a subfamily of $\mathcal{FSS}$ groups that are potential counterexamples to the conjecture.
\end{abstract}

\keywords{Thompson's groups, context-free languages, push-down automata}
                                                                                
\subjclass[2000]{20F10, 03D40 }

\maketitle

\section{Introduction}

Let $G$ be a group, and let $S$ be a finite subset that generates $G$ as a monoid.  The \emph{word problem of $G$ with respect to $S$}, denoted $\mathrm{WP}_{S}(G)$, is the collection of all positive words $w$ in $S$ such that $w$ represents the identity in $G$; the \emph{co-word problem of $G$ with respect to $S$}, denoted $\mathrm{CoWP}_{S}(G)$,
is the set of all positive words that represent non-trivial elements of $G$. In this point of view, both the word and the co-word problem of $G$ are formal languages, which suggests the question
of placing these problems within the Chomsky hierarchy of languages. 

Anisimov \cite{Anisimov} proved that $\mathrm{WP}_{S}(G)$ is a regular language if and only if
$G$ is finite. 
A celebrated theorem of Muller and Schupp \cite{MullerSchupp} says that a finitely generated group $G$ has context-free word problem  if and only if it is virtually free. (A language is \emph{context-free} if it is recognized by a pushdown automaton.)
In this case, as noted in \cite{Holt}, the word problem is actually a deterministic context-free language. Shapiro \cite{Shapiro} described sufficient conditions for a group 
to have a context-sensitive word problem.

Since the classes of regular, deterministic context-free, and context-sensitive languages
are all closed under taking complements, it is of no additional interest to study groups with regular, deterministic context-free, or context-sensitive co-word problems, since the classes of groups in question do not change. The (non-deterministic) context-free languages are not closed under taking complements, however, so groups with context-free co-word problem are not (a priori, at least) the same as groups with context-free word problem. 

Holt, Rees, R\"{o}ver, and Thomas \cite{Holt} introduced the class of groups with context-free co-word problem, denoted $co\mathcal{CF}$. They proved that all finitely generated virtually free groups are $co\mathcal{CF}$, and that the class of $co\mathcal{CF}$ groups is closed under taking finite direct products, passage to finitely generated subgroups, passage to finite index overgroups,
and taking restricted wreath products with virtually free top group. They proved negative results as well: for instance, the Baumslag-Solitar groups 
$BS(m,n)$ are not $co\mathcal{CF}$ if $|m| \neq |n|$,
and polycyclic groups are not $co\mathcal{CF}$ unless they are virtually abelian. They conjectured that $co\mathcal{CF}$ groups are not closed under the operation of taking
free products, and indeed specifically conjectured that $\mathbb{Z} \ast \mathbb{Z}^{2}$ is not a $co\mathcal{CF}$ group. 

Lehnert and Schweitzer \cite{Lehnert} later showed that the Thompson group $V$ is $co\mathcal{CF}$. Since $V$ seems to contain 
many types of subgroups (among them all finite groups, all countable free groups, and all countably generated free abelian groups), this raised the possibility of showing that $\mathbb{Z} \ast \mathbb{Z}^{2}$ is $co\mathcal{CF}$
by embedding the latter group into $V$. Bleak and Salazar-D\'{i}az \cite{Bleak1}, motivated at least in part by these considerations, proved that
$\mathbb{Z} \ast \mathbb{Z}^{2}$ does not embed in $V$ (leaving the conjecture from \cite{Holt} open), and also established the existence of many embeddings into $V$. The basic effect of their embedding theorems is to show that the class $\mathcal{V}$ of finitely generated subgroups of Thompson's group $V$
is closed under the same operations as those from \cite{Holt}, as listed above. 

The similarity between the classes $\mathcal{V}$ and $co\mathcal{CF}$ seems to have led to the following conjecture:
\begin{conjecture} \label{conjecture}
The classes $\mathcal{V}$ and $co\mathcal{CF}$ are the same; i.e., Thompson's group $V$ is a universal $co\mathcal{CF}$ group.
\end{conjecture}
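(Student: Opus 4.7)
The plan is to establish the two inclusions $\mathcal{V} \subseteq co\mathcal{CF}$ and $co\mathcal{CF} \subseteq \mathcal{V}$ separately, with almost all of the work concentrated in the second.

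For the direction $\mathcal{V} \subseteq co\mathcal{CF}$, I would simply stitch together two results already cited in the introduction. Lehnert and Schweitzer prove that $V$ is $co\mathcal{CF}$, and Holt, Rees, R\"over, and Thomas prove that $co\mathcal{CF}$ is closed under passage to finitely generated subgroups. Combining these, every finitely generated subgroup of $V$ is $co\mathcal{CF}$, so $\mathcal{V} \subseteq co\mathcal{CF}$. This direction is essentially free.

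For the direction $co\mathcal{CF} \subseteq \mathcal{V}$, I would need, for each finitely generated $co\mathcal{CF}$ group $G$, to construct an embedding $G \hookrightarrow V$. A natural route is to route the construction through the $\mathcal{FSS}$ framework: first realize $G$ (or rather some overgroup containing $G$) as a group of local similarities of a compact ultrametric space, and then compare that action with the natural action of $V$ on the Cantor set. A second, more structural, route is to match closure properties: $\mathcal{V}$ and $co\mathcal{CF}$ are known to be closed under the same list of operations (finite direct products, passage to finitely generated subgroups, finite-index overgroups, restricted wreath products with virtually free top group), so one could attempt to identify a generating family of $co\mathcal{CF}$ groups whose members all embed in $V$, and then propagate through those operations. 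A third, more automaton-theoretic, route is to extract from a pushdown automaton recognizing the co-word problem of $G$ a local-similarity action on an ultrametric space whose group contains $G$, using stack configurations as addresses.

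The hard part, and the reason this is stated as a conjecture rather than a theorem, is precisely this second direction. A concrete test case is $\mathbb{Z} \ast \mathbb{Z}^{2}$, which Bleak and Salazar-D\'{\i}az have shown does \emph{not} embed into $V$; so Conjecture~\ref{conjecture} implies the open conjecture of Holt, Rees, R\"over, and Thomas that $\mathbb{Z} \ast \mathbb{Z}^{2}$ is not $co\mathcal{CF}$. Consequently, any serious attack on the conjecture must either prove a general theorem realizing every $co\mathcal{CF}$ group as a local-similarity group inside $V$, or produce a counterexample. In view of the main result advertised in the abstract --- that $\mathcal{FSS}$ groups are $co\mathcal{CF}$ under a minimal additional hypothesis, and that a specific subfamily of them is singled out as potential counterexamples to Conjecture~\ref{conjecture} --- my own plan would be to invest most of the effort in the counterexample side: find an $\mathcal{FSS}$ group $G$ satisfying the hypothesis of the main theorem (hence $co\mathcal{CF}$) and show, using Bleak--Salazar-D\'{\i}az-style dynamical obstructions on the action of $V$ on the Cantor set, that $G$ admits no embedding into $V$. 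The obstacle at the heart of this plan is producing such an embedding obstruction intrinsic enough to be checked on an arbitrary $\mathcal{FSS}$ group, rather than only in isolated examples like $\mathbb{Z} \ast \mathbb{Z}^{2}$.
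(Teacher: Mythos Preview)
The statement you are addressing is a \emph{conjecture}, and the paper does not prove it; on the contrary, the introduction explicitly says that the main theorem ``leaves Conjecture~\ref{conjecture} open.'' So there is no proof in the paper to compare your proposal against.

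Your discussion is accurate as far as it goes: the inclusion $\mathcal{V}\subseteq co\mathcal{CF}$ is indeed immediate from Lehnert--Schweitzer together with the subgroup closure result of Holt--Rees--R\"over--Thomas, and the reverse inclusion is the genuinely open content of the conjecture. You also correctly observe that Conjecture~\ref{conjecture} implies the open problem about $\mathbb{Z}\ast\mathbb{Z}^{2}$, and that the paper's contribution is on the counterexample side, exhibiting a family of $co\mathcal{CF}$ groups (the Nekrashevych--R\"over-type $\mathcal{FSS}$ groups) for which no embedding into $V$ is currently known. But none of the three ``routes'' you sketch for $co\mathcal{CF}\subseteq\mathcal{V}$ is an actual argument: realizing an arbitrary $co\mathcal{CF}$ group as a local-similarity group, or extracting such an action from a PDA, is not something anyone knows how to do, and matching closure properties cannot establish the inclusion without first knowing a generating family for $co\mathcal{CF}$ under those operations, which is itself unknown. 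So what you have written is a fair summary of the landscape and a reasonable research plan, not a proof, and the paper offers nothing more on this point either.
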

Lehnert had conjectured in his thesis that a certain closely related group $Q$ of quasi-automorphisms of 
the infinite binary tree is a universal $co\mathcal{CF}$ group. Bleak, Matucci, and Neuh\"{o}ffer \cite{Bleak2} established the existence of embeddings from
$Q$ to $V$ and from $V$ to $Q$. As a result, Lehnert's conjecture is equivalent Conjecture \ref{conjecture}. 
We refer the reader to the excellent introductions of \cite{Bleak2} and \cite{Bleak1} for a more extensive discussion of these and related questions.

Here we show that many groups defined by finite similarity structures are contained in $co\mathcal{CF}$. The precise statement is as follows.

\begin{main} \label{maintheorem}
Let $X$ be a compact ultrametric space endowed with a finite similarity structure $\sm_{X}$. Assume that there are only finitely many
$\sm_{X}$-classes of balls.

For any finitely generated subgroup $G$ of $\Gamma(\sm_{X})$ and  finite
subset $S$ of $G$ that generates $G$ as a monoid,
the co-word problem $\mathrm{CoWP}_{S}(G) = \{ w \in S^{\ast} \mid w \neq 1_{G} \}$ is a context-free language.
\end{main}

The groups defined by finite similarity structures (or $\mathcal{FSS}$ groups) were first studied by Hughes \cite{Hughes1}, who showed that all $\mathcal{FSS}$ groups act properly on CAT(0) cubical complexes and (therefore) have the Haagerup property. Farley and Hughes \cite{FarleyHughes1} proved that a class of $\mathcal{FSS}$ groups have type $\mathcal{F}_{\infty}$. All of the latter groups satisfy the hypotheses of the main theorem, so all
are also $co\mathcal{CF}$ groups. (We note that the main theorem also covers $V$ as a special case.)

The class of $\mathcal{FSS}$ groups is not well-understood, but we can specify a certain subclass that shows promise as a source of counterexamples to
Conjecture \ref{conjecture}. These are the Nekrashevych-R\"{o}ver examples from \cite{FarleyHughes1} and \cite{Hughes1}. The results of \cite{FarleyHughes1} show that most
of these examples are not isomorphic to $V$ (nor to the $n$-ary versions of $V$), and it is not difficult to show that they do not contain $V$ as a subgroup of finite index. It seems to be unknown whether there are any embeddings of these groups into $V$.  Our main theorem therefore leaves
Conjecture \ref{conjecture} open.

(Note that the Nekrashevych-R\"{o}ver examples considered in \cite{FarleyHughes1} and \cite{Hughes1} are not as general as
the classes of groups from \cite{Rov99} and \cite{NekJOT}; the finiteness of the similarity structures proves to be a somewhat restrictive hypothesis.)

The proof of the main theorem closely follows the work of Lehnert and Schweitzer \cite{Lehnert}. We identify two main ingredients of their proof: 
\begin{enumerate}
\item All of the groups satisfying the hypothesis of the main theorem admit \emph{test partitions} (Definition \ref{def:test}). That is, there is a finite partition of the compact ultrametric space $X$ into balls,
such that every non-trivial word in the generators of $G$ has a cyclic shift that moves at least one of the balls off of itself, and
\item for each pair of distinct balls $(B_{1}, B_{2})$, where $B_{1}$ and $B_{2}$ are from the test partition, there is a ``$(B_{1}, B_{2})$-witness automaton", which is a pushdown automaton that can witness an element $g \in G$ moving part of $B_{1}$ into $B_{2}$.
\end{enumerate}
The main theorem follows very easily from (1) and (2). The proofs that (1) and (2) hold are complicated somewhat by the generality of our assumptions, but are already implicit in \cite{Lehnert}. Most of the work goes into building the witness automata. We describe a stack language $\mathcal{L}$ that the witness automata use to describe, store, and manipulate metric balls in $X$. One slight novelty (not present or necessary in \cite{Lehnert}) is that the witness automata write functions
from the similarity structure on their stacks and make partial computations using these functions.

We briefly describe the structure of the paper. Section \ref{section:background} contains a summary of the relevant background, including string  rewriting systems, pushdown automata, $\mathcal{FSS}$ groups, and standing assumptions. Section \ref{section:test} contains a proof that the groups $G$ admit test partitions, as described above. Section \ref{section:stacklanguage} describes the stack language for the witness automata, and Section \ref{section:witness} gives the construction of the witness automata. Section \ref{section:end} collects the ingredients of the previous sections into
a proof of the main theorem.

\section{Background} \label{section:background}

\subsection{String Rewriting Systems}

\begin{definition}
A \emph{rewrite system} is a directed graph $\Gamma$. We write $a \rightarrow b$ if $a$ and $b$ are vertices of $\Gamma$ and there is a directed edge
from $a$ to $b$. We write $a \dot{\rightarrow} b$ if there is a directed path from $a$ to $b$.
The rewrite system $\Gamma$ is called \emph{locally confluent} if  whenever $a \rightarrow b$ and $a \rightarrow c$, there is some $d \in \Gamma^{0}$ such
that $c \dot{\rightarrow} d$ and $b \dot{\rightarrow} d$. The rewrite system is \emph{confluent} if whenever $a \dot{\rightarrow} b$ and $a \dot{\rightarrow} c$, there is some $d \in \Gamma^{0}$ such
that $c \dot{\rightarrow} d$ and $b \dot{\rightarrow} d$.
The rewrite system $\Gamma$ is \emph{terminating} if there is no infinite directed path in $\Gamma$. If a rewrite system is both terminating and confluent, then
we say that it is \emph{complete}.
A vertex of $\Gamma$ is called \emph{reduced} if it is not the initial vertex of any directed edge in $\Gamma$. 
\end{definition}

\begin{theorem} \cite{Newman}
Every terminating, locally confluent rewrite system is complete. \qed
\end{theorem}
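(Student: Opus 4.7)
The plan is to prove confluence by well-founded induction on vertices of $\Gamma$, exploiting the fact that termination makes the converse of the reachability relation $\dot{\rightarrow}$ well-founded (if it were not, one could extract an infinite directed path in $\Gamma$). Call a vertex $a$ \emph{confluent} if whenever $a \dot{\rightarrow} b$ and $a \dot{\rightarrow} c$ there exists $d$ with $b \dot{\rightarrow} d$ and $c \dot{\rightarrow} d$. It suffices to show every vertex of $\Gamma$ is confluent, under the inductive hypothesis that every proper descendant of $a$ (every $a'$ with $a \dot{\rightarrow} a'$ and $a' \neq a$) is already confluent.

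For the inductive step, fix $a \dot{\rightarrow} b$ and $a \dot{\rightarrow} c$. If either of the two paths has length zero then one of $b,c$ coincides with $a$ and one may take $d$ to be the other endpoint. Otherwise factor the paths as $a \rightarrow b_1 \dot{\rightarrow} b$ and $a \rightarrow c_1 \dot{\rightarrow} c$. Local confluence applied to the two single edges $a \rightarrow b_1$ and $a \rightarrow c_1$ produces a vertex $e$ with $b_1 \dot{\rightarrow} e$ and $c_1 \dot{\rightarrow} e$.

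Next I would close the diagram using the inductive hypothesis twice. Since $b_1$ is a proper descendant of $a$, confluence at $b_1$ applied to the span $b \,\dot{\leftarrow}\, b_1 \dot{\rightarrow} e$ yields $f$ with $b \dot{\rightarrow} f$ and $e \dot{\rightarrow} f$. Similarly, confluence at the proper descendant $c_1$ applied to $c \,\dot{\leftarrow}\, c_1 \dot{\rightarrow} e \dot{\rightarrow} f$ yields $d$ with $c \dot{\rightarrow} d$ and $f \dot{\rightarrow} d$. Concatenating, $b \dot{\rightarrow} f \dot{\rightarrow} d$ and $c \dot{\rightarrow} d$, establishing confluence at $a$ and completing the induction.

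The main subtlety is the need for the induction itself rather than a direct diagram chase: local confluence closes a span of two single edges, but the closing vertex $e$ produced may lie arbitrarily far below $b_1$ and $c_1$, so there is no bounded diagram to draw once $b$ and $c$ are reached by paths of unknown length. Termination is exactly what one needs to promote the local statement to the global one, by well-founded descent to $b_1$ and $c_1$.
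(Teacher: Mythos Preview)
Your proof is correct; it is the standard well-founded-induction argument for Newman's Lemma (often attributed in this form to Huet). The paper does not give its own proof of this theorem: it simply cites \cite{Newman} and marks the statement with a \qed, so there is nothing to compare against beyond noting that your argument is the expected one.
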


\begin{remark}
The relation $\rightarrow$ generates an equivalence relation on the vertices of $\Gamma$. It is not difficult to see that each equivalence class in this equivalence relation contains a unique
reduced element in the event that $\Gamma$ is complete.
\end{remark}

\begin{definition} \label{def:rewrite}
Let $\Sigma$ be a finite set, called an \emph{alphabet}. Let $\mathcal{L}$ be a subset of the free monoid $\Sigma^{\ast}$.
Let $\mathcal{R}$ be a collection of relations (or \emph{rewriting rules}) of the form
$w_{1} \rightarrow w_{2}$, where $w_{1}$, $w_{2} \in \Sigma^{\ast}$. (Thus, the $w_{i}$ are positive words in the alphabet
$\Sigma$, either of which may be empty. The $w_{i}$ are not required to be in $\mathcal{L}$.)

We define a \emph{string rewriting system} as follows: The vertices are words from $\mathcal{L}$. For $u$, $v \in \mathcal{L}$,
there is a directed edge $u \rightarrow v$ whenever there are words $u'$, $u''$ such that $u = u' w_{1} u''$ and $v = u' w_{2}u''$, for some $w_{1} \rightarrow w_{2} \in \mathcal{R}$.
\end{definition}

\subsection{Pushdown Automata}

\begin{definition}
Let $S$ and $\Sigma$ be finite sets. The set $S$ is the \emph{input alphabet} and $\Sigma$ is the \emph{stack alphabet}. The
stack alphabet contains a special symbol, $\#$, called the \emph{initial stack symbol}.

A \emph{(generalized) pushdown automaton (or PDA) over $S$ and $\Sigma$}  is a finite labelled directed graph $\Gamma$ endowed with an \emph{initial state} $v_{0} \in \Gamma^{0}$
and a (possibly empty) collection  of \emph{terminal states} $T \subseteq \Gamma^{0}$. Each directed edge is labelled by a triple $(s, w', w'') \in (S \cup \{ \epsilon \}) \times
\Sigma^{\ast} \times \Sigma^{\ast}$, where $\epsilon$ denotes an empty string.

Each PDA accepts languages either by terminal state, or by empty stack, and this information must be specified as part of the automaton's definition. See Definition \ref{accept}.
\end{definition}

\begin{definition}
Let $\Gamma$ be a pushdown automaton. We describe a class of directed paths in $\Gamma$, called the \emph{valid paths}, by induction on length. The path of length $0$ starting at the
initial vertex $v_{0} \in \Gamma^{0}$ is valid; its \emph{stack value} is $\# \in \Sigma^{\ast}$. Let $e_{1} \ldots e_{n}$ ($n \geq 0$) be a valid path in $\Gamma$, where $e_{1}$ is the edge that is crossed first. Let $e_{n+1}$ be an edge whose initial vertex is the terminal vertex of $e_{n}$; we suppose that the label of $e_{n+1}$ is $(s, w', w'')$. The path 
$e_{1}e_{2} \ldots e_{n}e_{n+1}$ is also valid, provided that the stack value of $e_{1} \ldots e_{n}$ has $w'$ as a prefix; that is, if the stack value of $e_{1} \ldots e_{n}$ has the form
$w' \hat{w} \in \Sigma^{\ast}$. The stack value of $e_{1} \ldots e_{n+1}$ is then $w'' \hat{w}$. We let $\mathrm{val}(p)$ denote the stack value of a valid path $p$.

The \emph{label} of a valid path $e_{1} \ldots e_{n}$ is $s_{n} \ldots s_{1}$, 
where $s_{i}$ is the first coordinate of the label for $e_{i}$ (an element of $S$, or the empty string). The label of a valid path $p$ will be denoted $\ell(p)$.
\end{definition}

\begin{definition} \label{accept}
Let $\Gamma$ be a PDA.
The \emph{language $\mathcal{L}_{\Gamma}$ accepted by $\Gamma$} is either
\begin{enumerate}
\item $\mathcal{L}_{\Gamma} = \{ w \in S^{\ast} \mid w= \ell(p) \text{ for some valid path } p \text{ with } \mathrm{val}(p) = \epsilon \}$, if $\Gamma$ accepts by empty stack, or
\item $\mathcal{L}_{\Gamma} = \{ w \in S^{\ast} \mid w= \ell(p) \text{ for some valid path } p \text{ whose terminal vertex is in } T\}$, if $\Gamma$ accepts by terminal state.
\end{enumerate}
\end{definition}

\begin{definition}
A subset of the free monoid $S^{\ast}$ is called a \emph{(non-deterministic) context-free language} if it is $\mathcal{L}_{\Gamma}$, for some pushdown automaton $\Gamma$.
\end{definition}

\begin{remark}
The class of languages that are accepted by empty stack (in the above sense) is the same as the class of languages that are accepted by terminal state. That is, given an automaton 
$\Gamma'$ that accepts a language $\mathcal{L}$ by empty stack, there is another automaton $\Gamma''$ that accepts $\mathcal{L}$ by terminal state (and conversely).
\end{remark}

\begin{remark}
All of the automata considered in this paper will accept by empty stack. 

The functioning of an automaton $\Gamma$ can be described in plain language as follows. We begin with a word $s_{n} \ldots s_{1} \in S^{\ast}$ written on an input tape, 
and the word $\# \in \Sigma^{\ast}$ written on the memory tape (or stack). We imagine the stack as a sequence of boxes extending indefinitely to our left, all empty except for the rightmost one, which has $\#$ written in it. Our automaton reads the input tape from right to left. It can read and write on the stack only from the left (i.e., from the leftmost nonempty box).
Beginning in the initial state $v_{0} \in \Gamma_{0}$, it can follow any directed edge $e$ it chooses, provided that it meets the proper prerequisites: if the label of $e$ is 
$(s, w', w'')$, then $s$ must be the rightmost remaining symbol on the input tape, and the word $w' \in \Sigma^{\ast}$ must be a prefix of the word written on the stack. If these conditions are
met, then it can cross the edge $e$ into the next state, simultaneously erasing the letter $s$ from the input tape, erasing $w'$ from the left end of the stack, and then writing $w''$ on the left end of the stack. The original input word is accepted if the automaton can reach a state with nothing left on its input tape, and nothing on its stack (not even the symbol $\#$). 

We note that a label such as $(\epsilon, \epsilon, w'')$ describes an empty set of prerequisites. Such an arrow may always be crossed, without reading the input tape or the stack, no matter whether one or the other is empty. 
\end{remark}

\subsection{Review of ultrametric spaces and finite similarity structures}

We now give a quick review (without proofs) of finite similarity structures on  compact ultrametric spaces, as defined in \cite{Hughes1}. Most of this subsection is taken from 
\cite{FarleyHughes1}.

\begin{definition} \label{def:umetric} An {\it ultrametric space} is a metric space $(X,d)$ such that 
\[ d(x,y) \leq \max\{d(x,z), d(z,y)\}, \] for all
$x,y,z\in X$.
\end{definition}

\begin{lemma} \label{lemma:ultrametric}
Let $X$ be an ultrametric space.
\begin{enumerate}
\item Let $N_{\epsilon}(x)$ be an open metric ball in $X$. If $y \in N_{\epsilon}(x)$, then $N_{\epsilon}(x) = N_{\epsilon}(y)$.
\item If $B_{1}$ and $B_{2}$ are open metric balls in $X$, then either the balls are disjoint, or one is contained in the other.
\item If $X$ is compact, then each open ball $B$ is contained in at most finitely many distinct open balls of $X$, and these form an increasing sequence:
\[ B = B_{1} \subsetneq B_{2} \subsetneq \ldots \subsetneq B_{n} = X. \]
\item If $X$ is compact and $x$ is not an isolated point, then each open ball $N_{\epsilon}(x)$ is partitioned by its maximal proper open subballs, which are finite in number.
\end{enumerate}
\qed
\end{lemma}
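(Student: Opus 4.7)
For (1), I would take $z \in N_\epsilon(y)$ and apply the ultrametric inequality $d(x,z) \le \max\{d(x,y), d(y,z)\} < \epsilon$ to obtain $N_\epsilon(y) \subseteq N_\epsilon(x)$; symmetry gives the reverse inclusion. For (2), if $B_1$ and $B_2$ share a point $z$, I would use (1) to recenter both balls at $z$, after which the ball of smaller radius is contained in the one of larger radius.

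The main work is in (3). By (2), the open balls containing $B$ are totally ordered by inclusion, so it suffices to rule out a strictly increasing infinite chain $B = B_0 \subsetneq B_1 \subsetneq B_2 \subsetneq \cdots$. I would pick $y_k \in B_k \setminus B_{k-1}$ for each $k \ge 1$ and use (1) to write $B_k = N_{r_k}(y_k)$. Taking $x$ as the common center of $B$ and all of the $B_k$, the inclusion $B = N_\epsilon(x) \subsetneq N_{r_k}(x) = B_k$ forces $r_k > \epsilon$. For any $j > k$ we have $y_j \notin B_{j-1} \supseteq B_k = N_{r_k}(y_k)$, so $d(y_k, y_j) \ge r_k > \epsilon$. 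Hence $\{y_k\}$ is a sequence in the compact space $X$ whose pairwise distances are uniformly bounded below, contradicting the existence of a convergent subsequence. This compactness step is the principal obstacle; the key point is noticing that successive shells $B_k \setminus B_{k-1}$ must stay a uniform distance apart.

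For (4), I would first construct, for each $y \in B$, a largest proper open subball $C_y$ of $B$ containing $y$. Since $x$ is not isolated, $B$ contains some $z \ne y$, and then $N_{d(y,z)}(y)$ is a proper open subball of $B$ containing $y$. Applying (3) to this subball yields a finite chain of open balls containing it, one of which is $B$; the immediate predecessor of $B$ in the chain is the desired $C_y$, as one verifies using (2) and the exhaustiveness of the chain guaranteed by (3). By (2) combined with maximality, any two distinct $C_y$'s are disjoint, so they form a partition of $B$. Finally, $B$ is clopen in $X$ (the complement of an open ball is a union of open balls) and hence compact; the open cover of $B$ by its maximal proper open subballs admits a finite subcover, and pairwise disjointness forces the subcover to comprise \emph{all} of the subballs, yielding finiteness.
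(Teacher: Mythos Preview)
Your proof is correct. Note that the paper does not actually prove this lemma: it is stated as background on ultrametric spaces (with a bare \qed), so there is no ``paper's own proof'' to compare against. Your arguments for all four parts are sound; in particular, the uniform lower bound $d(y_k,y_j)\ge r_k>\epsilon$ in (3) and the use of (3) to extract the maximal proper subball $C_y$ in (4) are exactly the right ideas.
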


\begin{convention}
Throughout this paper, ``ball" will always mean ``open ball".
\end{convention}

\begin{definition}
Let $f: X \rightarrow Y$ be a function between metric spaces. We say that $f$ is a \emph{similarity} if there is a constant $C>0$ such that
$d_{Y}(f(x_{1}), f(x_{2})) = Cd_{X}(x_{1},x_{2})$, for all $x_{1}$ and $x_{2}$ in $X$.
\end{definition}

\begin{definition}
\label{def:fin sim struct}
A
 {\it finite similarity structure for $X$} is a function
$\sm_X$ that assigns to
each ordered pair $B_1, B_2$ of balls in $X$
a (possibly empty)
set $\sm_X(B_1,B_2)$ of surjective similarities
$B_1\to B_2$ such that whenever 
$B_1, B_2, B_3$ are balls in $X$, the following properties
hold:
\begin{enumerate}
\item (Finiteness) $\sm_X(B_1,B_2)$ is a finite set.
\item (Identities) $\id_{B_1}\in\sm_X(B_1,B_1)$.
\item (Inverses) If $h\in\sm_X(B_1,B_2)$, then $h^{-1}\in\sm_X(B_2,B_1)$.
\item (Compositions) 
If $h_1\in\sm_X(B_1,B_2)$ and $h_2\in\sm_X(B_2,B_3)$, then  \newline
$h_2h_1 \in \sm_X(B_1,B_3)$.
\item (Restrictions)
If $h\in\sm_X(B_1,B_2)$ and $B_3\subseteq B_1$, 
then $$h_{\mid B_3} \in\sm_X(B_3,h(B_3)).$$
\end{enumerate}
\end{definition}

\begin{definition}
A homeomorphism $h\co X\to X$ is \emph{locally determined by $\sm_{X}$} provided that for every $x \in X$, there exists a ball
$B'$ in $X$ such that $x \in B'$, $h(B')$ is a ball in $X$, and $h|B'\in\sm(B', h(B'))$.
\end{definition}

\begin{definition} \label{def:FSSgroup}
The \emph{finite similarity structure} 
(\emph{FSS}) \emph{group} 
$\Gamma(\sm_X)$ is the set of all homeomorphisms $h\co X\to X$ such that $h$ is locally determined by
$\sm_X$.
\end{definition}

\begin{remark}
The fact that $\Gamma(\sm_{X})$ is a group under composition is due to Hughes \cite{Hughes1}.
\end{remark}

\begin{definition} \label{def:simdomains} 
(\cite{Hughes1}, Definition 3.6)
If $\gamma \in \Gamma(\sm_{X})$, then we can choose a partition of $X$ by balls $B$ such 
that, for each $B$, $\gamma(B)$ is a ball and $\gamma_{\mid B} \in \sm_{X}(B, \gamma(B))$. Each element of this 
partition is called a \emph{region} for $g$.
\end{definition}

\subsection{Standing Assumptions}

In this section, we set conventions that hold for the rest of the paper.

\begin{definition} \label{def:simxclass}
We say that two balls $B_{1}$ and $B_{2}$ are in the same \emph{$Sim_{X}$-class} if the set
$\sm_{X}(B_{1}, B_{2})$ is non-empty.
\end{definition}

\begin{convention} \label{conventionsetting} 
We assume  that $X$ is a compact ultrametric space with finite similarity structure $\mathrm{Sim}_{X}$. We assume that there
are only finitely many $\mathrm{Sim}_{X}$-classes of balls, represented by 
\[ \tilde{B}_{1}, \ldots, \tilde{B}_{k}. \]
We let $[B]$ denote the $\mathrm{Sim}_{X}$-class of a ball $B$, and let $X = \tilde{B}_{1}$.

Each ball $B \subseteq X$ is related to exactly one of the $\tilde{B}_{i}$. We choose (and fix) an element $f_{B} \in \mathrm{Sim}_{X}(\tilde{B}_{i}, B)$. We choose $f_{\tilde{B}_{i}} = id_{\tilde{B}_{i}}$.

Each ball $\tilde{B}_{i}$ has a finite collection of maximal proper subballs, denoted
\[ \tilde{B}_{i1}, \ldots, \tilde{B}_{i \ell_{i}}. \]
This numbering (of the balls $\tilde{B}_{i}$ and their maximal proper subballs) is fixed throughout the rest of the argument. We let $\ell = \mathrm{max} \{ \ell_{1}, \ldots, \ell_{k} \}$.

We will for the most part freely recycle the subscripts $k$ and $\ell$. However, for the reader's convenience, we note ahead of time that we will use $k$ 
and $\ell$ with the above meaning in Definitions \ref{languageL}, \ref{def:bracketf}, \ref{def:rewriterules}, and \ref{def:witness}.
\end{convention}

\begin{convention} \label{conv:S}
We will let $G$ denote a finitely generated subgroup of $\Gamma(\sm_{X})$ (see Definition \ref{def:FSSgroup}). We choose
a finite set $S \subseteq G$ that generates $G$ as a monoid, i.e., each element $g \in G$ can be expressed in the 
form $g = s_{1} \ldots s_{n}$, where $s_{i} \in S$, $n \geq 0$, and only positive powers of the $s_{i}$ are used.
We choose (and fix) regions for each $s \in S$.
\end{convention}

\section{Test Partitions} \label{section:test}

\begin{definition} \label{def:test}
Let $\mathcal{P}$ be a finite partition of $X$. We say that $\mathcal{P}$ is a \emph{test partition} if, for any word $s_{1}\ldots s_{n}$
in the generators $S$, whenever
$$ s_{j} \ldots s_{n} s_{1} \ldots s_{j-1}(P) = P,$$
for all $j \in \{ 1, \ldots, n \}$ and $P \in \mathcal{P}$, then $s_{1} \ldots s_{n} = 1_{G}$.
\end{definition}

\begin{lemma}
If $X$ is a compact ultrametric space and $\epsilon > 0$, then $\{ N_{\epsilon}(x) \mid x \in X \}$ is a finite partition of $X$ by open balls.
\end{lemma}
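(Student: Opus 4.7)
The plan is to verify the three claims separately: that the collection consists of balls (immediate from definition), that it is a partition, and that it is finite.

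First I would argue that the collection is a partition. Each ball $N_{\epsilon}(x)$ is nonempty (it contains $x$), and the union is all of $X$ since $x \in N_\epsilon(x)$ for every $x$. For disjointness, suppose $y \in N_{\epsilon}(x_1) \cap N_{\epsilon}(x_2)$. By part (1) of Lemma \ref{lemma:ultrametric}, $N_{\epsilon}(x_1) = N_{\epsilon}(y) = N_{\epsilon}(x_2)$, so any two balls in the collection are either equal or disjoint. Thus the distinct balls in $\{N_{\epsilon}(x) \mid x \in X\}$ form a partition of $X$ by open balls.

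Next I would establish finiteness using compactness. The collection $\{N_{\epsilon}(x) \mid x \in X\}$ is an open cover of $X$, so it admits a finite subcover $\{N_{\epsilon}(x_1), \ldots, N_{\epsilon}(x_n)\}$. If some $N_{\epsilon}(x) \in \{N_{\epsilon}(x) \mid x \in X\}$ were not equal to any ball in this finite subcover, then by the disjointness established above, $N_{\epsilon}(x)$ would be disjoint from each $N_{\epsilon}(x_i)$; in particular the nonempty point $x$ would fail to be covered, contradicting the choice of subcover. Hence the full collection coincides with the finite subcover, and the partition is finite.

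There is no real obstacle here; the argument is a direct combination of the ultrametric ball dichotomy recorded in Lemma \ref{lemma:ultrametric} with compactness. The only point requiring slight care is that ``the collection'' should be understood as a set of subsets of $X$ (so equal balls are identified), which is implicit in calling it a partition.
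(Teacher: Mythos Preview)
Your proof is correct and is exactly the natural unpacking of the paper's one-line proof, which simply says ``This follows easily from Lemma~\ref{lemma:ultrametric}(1).'' You have filled in the details the paper omits, using the same key lemma for the partition claim and compactness for finiteness.
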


\begin{proof}
This follows easily from Lemma \ref{lemma:ultrametric}(1).
\end{proof}


\begin{definition}
Let $\epsilon_{1} > 0$ be chosen so that 
\[ \epsilon_{1} < \mathrm{min} \{ diam(B) \mid B ~\text{is a defining ball in}~ S \}.\] We let $\mathcal{P}_{big} = \{ N_{\epsilon_{1}}(x) \mid x \in X \}$. This is
the \emph{big partition}.
\end{definition}

Note that, for each $s \in S$ and $P \in \mathcal{P}_{big}$, $P$ is contained in a unique region of $s$.

\begin{lemma}
Let $B$ be a compact ultrametric space, and let $\Gamma$ be a finite group of isometries of $B$. There is an $\epsilon > 0$ such that if $\gamma \in \Gamma$ acts trivially on 
$\{ N_{\epsilon}(x) \mid x \in B \}$, then $\gamma = 1_{\Gamma}$.
\end{lemma}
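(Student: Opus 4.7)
The plan is to exploit compactness together with the ultrametric property and the finiteness of $\Gamma$ to produce a uniform $\epsilon$ that separates every non-trivial $\gamma$ from the identity on the partition level.

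First I would fix a non-trivial $\gamma \in \Gamma$ and consider the continuous function $\varphi_\gamma : B \to \mathbb{R}$ given by $\varphi_\gamma(x) = d(\gamma(x), x)$. Because $\gamma$ is not the identity, $\varphi_\gamma$ is strictly positive at some point, and because $B$ is compact, $\varphi_\gamma$ attains a positive maximum $\delta_\gamma > 0$ at some witness point $x_\gamma \in B$. Since $\Gamma$ is finite, I can then set
\[ \epsilon = \min \{ \delta_\gamma \mid \gamma \in \Gamma,\ \gamma \neq 1_\Gamma \} > 0. \]

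Next I would verify that this $\epsilon$ works. Given any non-trivial $\gamma \in \Gamma$, the chosen witness satisfies $d(\gamma(x_\gamma), x_\gamma) \geq \epsilon$, so $\gamma(x_\gamma) \notin N_\epsilon(x_\gamma)$. Since $\gamma$ is an isometry, $\gamma(N_\epsilon(x_\gamma)) = N_\epsilon(\gamma(x_\gamma))$, and by Lemma \ref{lemma:ultrametric}(1) the ball $N_\epsilon(\gamma(x_\gamma))$ coincides with $N_\epsilon(x_\gamma)$ only when $\gamma(x_\gamma) \in N_\epsilon(x_\gamma)$, which fails. Therefore $\gamma$ sends the partition element $N_\epsilon(x_\gamma)$ to a different partition element, and in particular does not act trivially on the partition $\{N_\epsilon(x) \mid x \in B\}$. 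Contrapositively, any $\gamma$ acting trivially on the partition must be the identity.

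I expect no real obstacle here; the only subtle point is remembering that in an ultrametric space two metric balls of the same radius either coincide or are disjoint, which is what converts ``$\gamma$ moves a point by at least $\epsilon$'' into ``$\gamma$ permutes the $\epsilon$-ball partition non-trivially.'' The finiteness of $\Gamma$ is essential to pass from a $\gamma$-dependent bound $\delta_\gamma$ to a uniform $\epsilon$; without it, the infimum could collapse to zero.
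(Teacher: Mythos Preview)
Your proof is correct and follows essentially the same route as the paper: for each nontrivial $\gamma$ you pick a witness point moved by $\gamma$, record the positive displacement, take the minimum over the finitely many nontrivial elements, and then use the ultrametric fact that two $\epsilon$-balls coincide exactly when one center lies in the other. The only cosmetic difference is that you invoke compactness to find the \emph{maximum} displacement $\delta_\gamma$, whereas the paper simply picks any point with $\gamma(x_\gamma)\neq x_\gamma$ and any $\epsilon_\gamma$ separating the two resulting balls; since all you actually use is that $\delta_\gamma>0$, the compactness step is harmless but unnecessary.
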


\begin{proof}
For each nontrivial $\gamma \in \Gamma$, there is $x_{\gamma} \in X$ such that $\gamma(x_{\gamma}) \neq x_{\gamma}$. We choose $\epsilon_{\gamma} > 0$ satisfying
\[ N_{\epsilon_{\gamma}}(x_{\gamma}) \cap N_{\epsilon_{\gamma}}(\gamma(x_{\gamma})) = \emptyset. \]
We set $\epsilon = \mathrm{min}\{ \epsilon_{\gamma} \mid \gamma \in \Gamma - \{ 1_{\Gamma} \} \}$. Now suppose that $\gamma \neq 1_{\Gamma}$ and $\gamma$ acts trivially on 
$\{ N_{\epsilon}(x) \mid x \in X \}$. Thus $\gamma (N_{\epsilon}(x_{\gamma})) = N_{\epsilon}(x_{\gamma})$, so $N_{\epsilon}(\gamma(x_{\gamma})) \cap N_{\epsilon}(x_{\gamma}) \neq \emptyset$, 
but 
\[ N_{\epsilon}( \gamma(x_{\gamma})) \cap N_{\epsilon}(x_{\gamma}) \subseteq N_{\epsilon_{\gamma}}(x_{\gamma}) \cap N_{\epsilon_{\gamma}}(\gamma(x_{\gamma})) = \emptyset, \]
a contradiction.
\end{proof}

\begin{definition}
Write $\mathcal{P}_{big} = \{ B_{1}, \ldots, B_{\ell} \}$. For each $B_{i}$ ($i \in \{ 1, \ldots, \ell \}$), we can choose $\hat{\epsilon}_{i}$ to meet the conditions satisfied by $\epsilon$ in the previous lemma, for
$\Gamma = \mathrm{Sim}_{X}(B_{i}, B_{i})$. Let $\epsilon_{2} = \mathrm{min} \{ \hat{\epsilon}_{1}, \ldots, \hat{\epsilon}_{\ell} \}$. Let $\mathcal{P}_{small} = \{ N_{\epsilon_{2}}(x) \mid x \in X \}$. This is the
\emph{small partition}.
\end{definition}

\begin{proposition} \label{prop:testpartition}
The small partition $\mathcal{P}_{small}$ is a test partition.
\end{proposition}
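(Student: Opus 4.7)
Let $w = s_1 \ldots s_n$ be a word in $S$ whose every cyclic shift $h_j := s_j \ldots s_n s_1 \ldots s_{j-1}$ fixes every $P \in \mathcal{P}_{small}$ setwise, and set $g := h_1$. My plan is to prove $g = 1_G$ by showing $g|_B = \id_B$ for every $B \in \mathcal{P}_{big}$; since $\mathcal{P}_{big}$ partitions $X$, this suffices.

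Observe first that one may take $\epsilon_2 \le \epsilon_1$---each $\hat{\epsilon}_i$ in the definition of $\epsilon_2$ was furnished by the preceding lemma and may be shrunk at will---so $\mathcal{P}_{small}$ refines $\mathcal{P}_{big}$, and the $j = 1$ case of the cyclic hypothesis forces $g(B) = B$ for every $B \in \mathcal{P}_{big}$. Suppose, granting the central claim proved in the last paragraph, that $g|_B$ is a similarity $B \to B$. Then (treating singleton $B$ separately, where the conclusion is trivial) compactness of $B$ together with $\diam(B) > 0$ forces the similarity constant to equal $1$, so $g|_B$ is an isometry and $g|_B \in \sm_X(B, B)$. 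Applying the preceding lemma to the finite isometry group $\sm_X(B, B)$ with $\epsilon = \hat{\epsilon}_i$ (where $B = B_i$), and using that $g|_B$ setwise-fixes every $N_{\epsilon_2}(x) \subseteq B$ (and hence every $N_{\hat{\epsilon}_i}(x) \subseteq B$, the latter being a disjoint union of $\mathcal{P}_{small}$-balls by ultrametric refinement), gives $g|_B = \id_B$.

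The central claim is that $g|_B$ is a similarity. I plan to trace the orbit $B_0 = B$, $B_k := s_{n-k+1}(B_{k-1})$ for $k = 1, \ldots, n$, which by the $j = 1$ hypothesis returns to $B_n = g(B) = B$, and to prove inductively that each $B_k$ is a single ball contained in a single region of $s_{n-k}$. Granted this, $g|_B = s_1|_{B_{n-1}} \circ \ldots \circ s_n|_{B_0}$ is a composition of similarities and hence a similarity. The base case $k = 0$ is the Note following the definition of $\mathcal{P}_{big}$. For the inductive step I expect to use the full cyclic hypothesis (not merely the $j = 1$ case): the shift $h_{n-k+1}$ fixes every $\mathcal{P}_{big}$-ball setwise, which constrains the intermediate ball $B_k$ to be controlled by $\mathcal{P}_{big}$-balls. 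A likely technical ingredient---and the step I expect to demand the most care---is a mild strengthening of the defining inequality for $\epsilon_1$ ensuring that the image $s(B')$ of every $B' \in \mathcal{P}_{big}$ under every $s \in S$ is contained in some $\mathcal{P}_{big}$-ball; with this in force, the entire orbit of $B$ under $w$ stays inside $\mathcal{P}_{big}$-balls and hence inside single regions of each successive generator, making the inductive claim automatic.
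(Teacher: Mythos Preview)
Your central claim---that for each $B \in \mathcal{P}_{big}$ the orbit $B_0 = B,\ B_k = s_{n-k+1}(B_{k-1})$ stays inside single regions of the successive generators---does not hold, and the proposed ``mild strengthening'' of $\epsilon_1$ cannot rescue it. The obstacle is that elements of $\sm_X$ may \emph{expand}: in Thompson's group $V$ (the prototype example covered by this theorem) a generator $s$ can have a region $[11]$ with image $[1]$, stretching by a factor of $2$. Whatever $\epsilon_1$ you choose, the balls of $\mathcal{P}_{big}$ are cylinders of some fixed depth $m$, and $s$ carries a depth-$m$ cylinder inside $[11]$ onto a depth-$(m-1)$ cylinder, which is strictly larger than (and hence not contained in) any $\mathcal{P}_{big}$-ball. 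Thus the condition ``$s(B')$ is contained in some $\mathcal{P}_{big}$-ball for every $B'\in\mathcal{P}_{big}$ and $s\in S$'' is simply unavailable, and once the orbit escapes a region you lose the ball property and with it the conclusion $g|_B \in \sm_X(B,B)$. The appeal to the cyclic hypothesis does not help here: knowing that each $h_j$ fixes $\mathcal{P}_{big}$-balls setwise tells you about the composite, not about the individual images $B_k$, which may well be larger than any region of $s_{n-k}$.

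The paper's argument circumvents exactly this difficulty. Rather than tracking a $\mathcal{P}_{big}$-ball through the word, it fixes a point $x$ moved by $g$ and, via a Sublemma, finds a \emph{possibly much smaller} ball $B \ni x$ whose orbit under the successive generators does stay inside regions (this is arranged by taking $B$ small enough, with a constant depending on $n$), \emph{and} such that at some intermediate stage $j$ the image $s_j \ldots s_n(B)$ actually lands in $\mathcal{P}_{big}$. The latter is obtained by enlarging $B$ until the region-containment first fails; at that transition the orbit must coincide with a region, hence be partitioned by $\mathcal{P}_{big}$. The cyclic hypothesis is then applied to the $\mathcal{P}_{big}$-ball $P = s_j\ldots s_n(B)$, yielding a nontrivial element of the finite group $\sm_X(P,P)$ that fixes every $\mathcal{P}_{small}$-ball---the desired contradiction. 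The missing idea in your outline is precisely this two-sided optimization: small enough to stay in regions, yet maximal so as to hit $\mathcal{P}_{big}$ somewhere along the way.
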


\begin{proof}
Let $s_{1} \ldots s_{n}$ be a word in the generators $S$; we assume $s_{1} \ldots s_{n} \neq 1$. We suppose, for a contradiction, that for all $P \in \mathcal{P}_{small}$, 
\[ s_{j} \ldots s_{n} s_{1} \ldots s_{j-1} (P) = P, \] 
for all $j \in \{ 1, \ldots, n \}$. Since $s_{1} \ldots s_{n} \neq 1$, we can find $x \in X$ such that $s_{1} \ldots s_{n}(x) \neq x$.

\begin{sublemma} \label{sublemma}
Fix $s_{1} \ldots s_{n} \in S^{+}$. For each $x \in X$, there is an open ball $B$, with $x \in B$, such that:
\begin{enumerate}
\item $s_{j} \ldots s_{n}(B)$ lies in a region of $s_{j-1}$, for $j = 2, \ldots, n+1,$ and
\item $s_{j} \ldots s_{n}(B) \in \mathcal{P}_{big}$ for at least one $j \in \{ 1, \ldots, n \}$.
\end{enumerate}
\end{sublemma}

\begin{proof}
We first prove that, for any $x \in X$, there is a ball neighborhood $B$ of $x$ satisfying (1). We choose and fix $x \in X$. 

Consider the elements $s_{1}, s_{2}, \ldots, s_{n} \in G$. We first observe that there is a constant $C \geq 1$ such that if any ball $B'$ lies inside a region for $s_{j}$ (for any $j \in \{ 1, \ldots, n \}$), then 
$s_{j}$ stretches $B'$ by a factor of no more than $C$. Next, observe that there is a constant $D$ such that any ball of diameter less than or equal to
$D$ lies inside of a region for $s_{j}$, for all $j \in \{ 1, \ldots, n \}$. It follows easily that any ball of diameter less than $D/C^{n-1}$ satisfies (1); we can clearly choose some such ball, $B_{1}$ to be a neighborhood of $x$. We note that if a ball satisfies (1), then so does every subball.

Let 
\[ B_{1} \subsetneq B_{2} \subsetneq B_{3} \subsetneq \ldots \subsetneq B_{m} = X \]
be the collection of all balls containing $B_{1}$. (Thus, each $B_{i}$ is a maximal proper subball inside $B_{i+1}$, for $i = 1, \ldots, m-1$.)
There is a maximal ball $B_{\alpha}$, $\alpha \in \{ 1, \ldots, m \}$, such that $B_{\alpha}$ satisfies (1).

If $\alpha = m$, then the entire composition $s_{1} \ldots s_{n} \in \sm_{X}(X,X)$. We then take
$P \in \mathcal{P}_{big}$ such that $s_{1} \ldots s_{n}(x) \in P$. 
The required $B$ is $(s_{1}\ldots s_{n})^{-1}(P)$.

Now assume that $\alpha < m$. There is some $j \in \{ 1, \ldots, n \}$
such that $s_{j+1} \ldots s_{n}(B_{\alpha+1})$ is a ball
and 
\[(s_{j+1} \ldots s_{n})_{\mid B_{\alpha+1}} \in \sm_{X}(B_{\alpha+1}s_{j+1} \ldots s_{n}(B_{\alpha+1}),\]
 but $(s_{j+1} \ldots s_{n})(B_{ \alpha + 1})$ properly contains a region for $s_{j}$; let
$\hat{B}_{1}, \ldots, \hat{B}_{\beta}$ be the regions of $s_{j}$ that are contained in $(s_{j+1} \ldots s_{n})(B_{\alpha+1})$. We must have $\hat{B}_{\delta} \subseteq (s_{j+1} \ldots s_{n})(B_{\alpha})$ for some
$\delta$ (by maximality of $(s_{j+1} \ldots s_{n})(B_{\alpha})$ in $(s_{j+1} \ldots s_{n})(B_{\alpha + 1})$); the reverse containment $(s_{j+1} \ldots s_{n})(B_{\alpha}) \subseteq \hat{B}_{\delta}$ follows, since 
$(s_{j+1} \ldots s_{n})(B_{\alpha})$ is contained in a region for $s_{j}$ by our assumptions.

Now note that $\hat{B}_{\delta}$ is partitioned by elements of $\mathcal{P}_{big}$; there is some $P \subseteq \hat{B}_{\delta}$ such that $s_{j+1} \ldots s_{n} (x) \in P$. We have that the map
$s_{j+1} \ldots s_{n}: B_{\alpha} \rightarrow \hat{B}_{\delta}$ is a map from the similarity structure. The required ball $B$ is $(s_{j+1} \ldots s_{n})^{-1}(P)$.
\end{proof}

Apply the sub lemma to $x$: there is $B$ (an open ball) with the given properties. Let $j$ be such that 
\[ (s_{j} \ldots s_{n})_{\mid B} \in \mathrm{Sim}_{X}(B, s_{j} \ldots s_{n}(B)), \]
where $s_{j} \ldots s_{n}(B) \in \mathcal{P}_{big}$.

Since $s_{j} \ldots s_{n}(B)$ is invariant under every cyclic permutation of $s_{1} \ldots s_{n}$ by our assumption,
\[ s_{j} \ldots s_{n}s_{1} \ldots s_{j-1} (s_{j} \ldots s_{n}(B)) = s_{j} \ldots s_{n}(B), \]
so $s_{1} \ldots s_{n}(B) = B$.

Our assumptions imply that 
$(s_{j} \ldots s_{n})_{\mid B}: B \rightarrow s_{j}\ldots s_{n}(B) = P$ and $(s_{1}\ldots s_{j-1})_{\mid P}: P \rightarrow B$
are both in $\mathrm{Sim}_{X}$, and both are bijections.

Consider $(s_{j} \ldots s_{n} s_{1} \ldots s_{j-1})_{\mid P} \in \mathrm{Sim}_{X}(P,P)$. It must be that 
$(s_{j} \ldots s_{n} s_{1} \ldots s_{j-1})_{P} \neq 1_{P}$; if $(s_{j} \ldots s_{n} s_{1} \ldots s_{j-1})_{P} = 1$, then
\[ s_{j} \ldots s_{n} s_{1} \ldots s_{j-1}(s_{j} \ldots s_{n})(x) = s_{j} \ldots s_{n}(x), \]
which implies that $s_{1} \ldots s_{n}(x) = x$, a contradiction.

Now, since $(s_{j} \ldots s_{n} s_{1} \ldots s_{j-1})_{P} \neq 1_{P}$, it moves some element of $\mathcal{P}_{small}$.
\end{proof}

\section{A language for $\mathrm{Sim}_{X}$} \label{section:stacklanguage}

In this section, we introduce languages $\mathcal{L}_{red}$ and $\mathcal{L}$. The language
$\mathcal{L}$ will serve as the stack language for the witness automata of Section \ref{section:witness}.
The language $\mathcal{L}_{red}$ consists of the reduced elements of $\mathcal{L}$; it is useful because
there is a one-to-one correspondence between elements of $\mathcal{L}_{red}$ and metric balls in $X$.

\subsection{The languages $\mathcal{L}_{red}$ and $\mathcal{L}$}

\begin{definition} \label{languageL}
We define a language $\mathcal{L}_{red}$ as follows.
The alphabet $\Sigma$ for $\mathcal{L}_{red}$ consists of the symbols:
\begin{enumerate}
\item $\#$, the initial stack symbol;
\item $A_{1, \emptyset}$;
\item $A_{i,n}$, $i \in \{ 1, \ldots, k \}$, $n \in \{ 1, \ldots, \ell \}$.
\end{enumerate}
(We refer the reader to Convention \ref{conventionsetting} for the meanings of $k$ and $\ell$.)
The language $\mathcal{L}_{red}$ consists of all words of the form
\[ A_{1,\emptyset}A_{i_{1},n_{1}}A_{i_{2},n_{2}}  \ldots A_{i_{m},n_{m}} \#, \]
where $m \geq 0$ and $[\tilde{B}_{i_{s-1}n_{s}}] = [\tilde{B}_{i_{s}}]$ for 
$s = 1, \ldots, m$. (Here, and in what follows, we make the convention that $i_{0} = 1$; i.e., that 
$X = \tilde{B}_{i_{0}}$.)

The language $\mathcal{L}$ also uses symbols of the form $[f]$, where $f \in \mathrm{Sim}_{X}(\tilde{B_{i}}, \tilde{B_{i}})$. The general element of
$\mathcal{L}$ takes the form
\[ A_{1,\emptyset}w_{0}A_{i_{1},n_{1}}w_{1}A_{i_{2},n_{2}} w_{2}  \ldots A_{i_{m-1}, n_{m-1}} w_{m-1} A_{i_{m},n_{m}} w_{m} \#, \]
where each $w_{j}$ ($j \in \{ 0,1, \ldots, m\}$) is a word in the symbols $\{ [f] \mid f \in \mathrm{Sim}_{X}(\tilde{B}_{i_{j}}, \tilde{B}_{i_{j}}) \}$, and some or all of the $w_{j}$ might be empty. 
\end{definition}

\begin{remark}
The letter $A_{i,n}$ signifies a ball of similarity class $[\tilde{B}_{i}]$; the $n$ signifies that it is the $n$th maximal proper subball of the ball before
it in the sequence. The letter $A_{1,\emptyset}$ signifies the top ball, $X$. Refer to Convention \ref{conventionsetting} for the significance of $i$ and $n$. 

The condition $[\tilde{B}_{i_{s-1}n_{s}}] = [\tilde{B}_{i_{s}}]$ for 
$s = 1, \ldots, m$ is designed to insure that each ball has the correct type; i.e., that the sequence encodes consistent information about the similarity types of subballs.
\end{remark}

\begin{definition}
Let $\mathcal{B}_{X}$ denote the collection of all metric balls in $X$. We define an \emph{evaluation map}
$E: \mathcal{L} \rightarrow \mathcal{B}_{X}$ by sending
\[ w = A_{1,\emptyset}w_{0}A_{i_{1},n_{1}}w_{1}A_{i_{2},n_{2}} w_{2}  \ldots A_{i_{m-1}, n_{m-1}} w_{m-1} A_{i_{m},n_{m}} w_{m} \#, \]
to 
\[ E(w) = \left( f_{\tilde{B}_{1n_{1}}} \circ f_{w_{0}} \circ f_{\tilde{B}_{i_{1}n_{2}}} \circ f_{w_{1}} \circ \ldots \circ f_{w_{m-1}} \circ f_{\tilde{B}_{i_{m-1}n_{m}}} \right) (\tilde{B}_{i_{m}}),\]
where $f_{w_{i}} = f_{j_{1}} \circ f_{j_{2}} \circ \ldots \circ f_{j_{\alpha}}$ if $w_{i} = [f_{j_{1}}] [f_{j_{2}}] \ldots [f_{j_{\alpha}}]$.
\end{definition}

\begin{definition}
Let $w, w' \in \mathcal{L}$. We say that $w'$ is a \emph{prefix} of $w$ if $w'$ with the initial stack symbol $\#$ 
omitted is a prefix of $w$ in the usual sense; that is $w = w'u$, for some string $u \in \Sigma^{\ast}$ .
\end{definition}

\begin{proposition}
The function $E: \mathcal{L}_{red} \rightarrow \mathcal{B}_{X}$ is a bijection.

Moreover, a word $w' \in \mathcal{L}_{red}$ is a proper prefix of $w \in \mathcal{L}_{red}$ if and only if
$E(w)$ is a proper subball of $E(w')$, and $w'$ is a maximal proper prefix of $w$ if and only if $E(w)$ is a maximal proper subball of $E(w')$.

\end{proposition}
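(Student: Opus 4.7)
The plan is to prove everything by induction on the length of the word in $\mathcal{L}_{red}$, using the prefix-subball correspondence as the main tool. The central observation is that adding one letter $A_{i,n}$ to the end of a reduced word corresponds to descending into a particular maximal proper subball of the previously-evaluated ball, so the claim about (maximal) proper prefixes matching (maximal) proper subballs is really the inductive engine of the bijectivity.

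First I would check, by induction on $m$, that for any $w = A_{1,\emptyset} A_{i_1,n_1}\cdots A_{i_m,n_m}\# \in \mathcal{L}_{red}$ the composition $\phi_m := f_{\tilde B_{1 n_1}} \circ f_{\tilde B_{i_1 n_2}} \circ \cdots \circ f_{\tilde B_{i_{m-1} n_m}}$ is a well-defined similarity from $\tilde B_{i_m}$ onto $\tilde B_{i_{m-1} n_m}$; the consistency condition $[\tilde B_{i_{s-1} n_s}] = [\tilde B_{i_s}]$ in the definition of $\mathcal{L}_{red}$ is exactly what is needed at each step to match the codomain of $f_{\tilde B_{i_{s-2} n_{s-1}}}$ with the domain of $f_{\tilde B_{i_{s-1} n_s}}$. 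Then $E(w) = \phi_m(\tilde B_{i_m}) = \phi_{m-1}(\tilde B_{i_{m-1} n_m})$, which since $f$'s are similarities onto their images is exactly the image under $\phi_{m-1}$ of a maximal proper subball of $\tilde B_{i_{m-1}}$, and hence a maximal proper subball of $E(w^{\prime})$, where $w^{\prime}$ is the one-letter-shorter prefix. Iterating this gives the forward direction: a prefix chain in $\mathcal{L}_{red}$ maps to a chain of balls each a maximal proper subball of its predecessor, and in particular proper prefixes go to proper subballs, with maximality preserved.

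For surjectivity, let $B \subseteq X$ be any ball. By Lemma \ref{lemma:ultrametric}(3), the sequence of balls of $X$ containing $B$ is a finite chain $B = B_m \subsetneq B_{m-1} \subsetneq \cdots \subsetneq B_0 = X$ in which each $B_s$ is a maximal proper subball of $B_{s-1}$ (using Lemma \ref{lemma:ultrametric}(4) at each step). I read off a word inductively: set $\phi_0 = \id$ and $i_0 = 1$; given $\phi_{s-1}: \tilde B_{i_{s-1}} \to B_{s-1}$, the preimage $\phi_{s-1}^{-1}(B_s)$ is a maximal proper subball of $\tilde B_{i_{s-1}}$, hence equals $\tilde B_{i_{s-1} n_s}$ for a unique $n_s$; set $i_s$ to be the index of the $\mathrm{Sim}_X$-class of $\tilde B_{i_{s-1} n_s}$, and $\phi_s = \phi_{s-1} \circ f_{\tilde B_{i_{s-1} n_s}}$. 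The resulting word lies in $\mathcal{L}_{red}$ by construction and satisfies $E(w) = B$.

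For injectivity, suppose $E(w) = E(w') = B$ with $w,w' \in \mathcal{L}_{red}$. The argument above shows that the prefixes of $w$ (resp.\ $w'$) evaluate to the unique chain of balls containing $B$, so $w$ and $w'$ have the same length and $E$ takes equal values on corresponding prefixes. It then suffices to show that the last letter $A_{i_m,n_m}$ is uniquely determined: $n_m$ is forced by the requirement $\phi_{m-1}(\tilde B_{i_{m-1} n_m}) = B$ (i.e., $\tilde B_{i_{m-1} n_m}$ is the particular maximal proper subball of $\tilde B_{i_{m-1}}$ that corresponds to $B$ under $\phi_{m-1}$), and $i_m$ is then forced by $[\tilde B_{i_{m-1} n_m}] = [\tilde B_{i_m}]$, which by Convention \ref{conventionsetting} picks out a unique model $\tilde B_{i_m}$. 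This completes bijectivity, and the prefix/subball correspondence in both directions is exactly what was shown above.

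The only step with any subtlety is keeping the bookkeeping straight between the abstract model balls $\tilde B_i$, the concrete balls $\tilde B_{i_{s-1} n_s}$ inside them, and the similarities $f_B$ that intertwine them; no genuine obstacle arises because the finite similarity structure axioms (composition and restriction) guarantee that the inductive similarities $\phi_s$ really do lie in $\mathrm{Sim}_X$ and that their restrictions to subballs behave as required.
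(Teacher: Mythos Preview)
Your proof is correct and shares the paper's core strategy: use the chain of balls $B = B_m \subsetneq \cdots \subsetneq B_0 = X$ for surjectivity, and drive everything through the prefix/subball correspondence. The main organizational difference is in the injectivity step. The paper argues by contradiction: if $w_1 \neq w_2$ with $E(w_1)=E(w_2)$, take the longest common prefix $w_3$; since the next letters differ, $E(w_1)$ and $E(w_2)$ lie in distinct maximal proper subballs of $E(w_3)$ and are therefore disjoint. You instead argue constructively by induction on length, showing that the ball $B$ determines the word letter by letter (the $(m{-}1)$-prefixes agree by the inductive hypothesis, and then $n_m$ and $i_m$ are forced). Both are valid; the paper's argument is slightly quicker, while yours has the virtue of making explicit that the surjectivity construction is actually an inverse to $E$. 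One small point worth tightening: your final sentence asserts the converse direction of the prefix/subball correspondence (``$E(w)$ a proper subball of $E(w')$ implies $w'$ a prefix of $w$'') without spelling it out---it does follow immediately from bijectivity plus the forward direction, but you should say so.
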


\begin{proof}
We first prove surjectivity. Let $B$ be a ball in $X$. We let
\[ B = B_{m} \subseteq B_{m-1} \subseteq B_{m-2} \subseteq \ldots \subseteq B_{0} = X \]
be the collection of all balls in $X$ that contain $B_{m}$. (Thus, $B_{i}$ is a maximal proper subball in $B_{i-1}$ for $i = 1, \ldots, m$.)

In the diagram
\[ \xymatrix{
\tilde{B}_{i_{m}} \ar[d]_{f_{B_{m}}} & \tilde{B}_{i_{m-1}} \ar[d]_{f_{B_{m-1}}} & \tilde{B}_{i_{m-2}} \ar[d]_{f_{B_{m-2}}} & \ldots & \tilde{B}_{0} \ar[d]_{f_{X}}\\ 
B_{m} \ar[r] & B_{m-1} \ar[r] & B_{m-2} \ar[r] & \ldots  \ar[r] & X,} \]
the balls $\tilde{B_{i_{j}}}$ and the maps $f_{B_{j}}$ are the ones given in Convention \ref{conventionsetting}; the unlabeled arrows are inclusions. Note, in particular, that
the maps $f_{B_{j}}$ are bijections taken from the $\sm_{X}$-structure, and that $f_{X}$ is the identity map. If we follow the arrows from $\tilde{B}_{i_{j}}$ to $\tilde{B}_{i_{j-1}}$, the corresponding composition is a member of
$\sm_{X}$ that carries the ball $\tilde{B}_{i_{j}}$ to a maximal proper subball of $\tilde{B}_{i_{j-1}}$. Supposing that the number of the latter maximal proper subball is $n_{j}$ (see Convention \ref{conventionsetting}), we obtain 
a diagram
\[ \xymatrix{
\tilde{B}_{i_{m}} \ar[d]_{f_{B_{m}}} \ar[r]^{I_{m}} &  \tilde{B}_{i_{m-1}} \ar[d]_{f_{B_{m-1}}} \ar[r]^{I_{m-1}} & \tilde{B}_{i_{m-2}} \ar[d]_{f_{B_{m-2}}} \ar[r]^{I_{m-2}} & \ldots \ar[r]^{I_{1}} & \tilde{B}_{0} \ar[d]_{f_{X}}\\ 
B_{m} \ar[r] & B_{m-1} \ar[r] & B_{m-2} \ar[r] & \ldots  \ar[r] & X,} \]
where $I_{j} = f_{\tilde{B}_{i_{j-1} n_{j}}}$, for $j =1, \ldots, m$. This diagram commutes ``up to images": that is, if we start at a given node in the diagram, then the image of that first node in any other node is 
independent of path. (The diagram is not guaranteed to commute in the usual sense.) Set $w = A_{1,\emptyset} A_{i_{1}, n_{1}} \ldots A_{i_{m}, n_{m}}$. We note that

\begin{align*}
 E(w) &= (I_{1} \circ I_{2} \circ \ldots \circ I_{m})(\tilde{B}_{i_{m}})  \\
&= (f_{X} \circ I_{1} \circ I_{2} \circ \ldots \circ I_{m})(\tilde{B}_{i_{m}}) \\
&= f_{B_{m}}(\tilde{B}_{i_{m}}) \\
&= B_{m},
\end{align*}
where the first equality is the definition of $E(w)$, the second follows since $f_{X} = \id_{X}$, the third follows from the commutativity of the diagram up to images, and the fourth follows from 
surjectivity of $f_{B_{m}}$. This proves that $E: \mathcal{L}_{red} \rightarrow \mathcal{B}_{X}$ is surjective.

Before proving injectivity of $E$, we note that, for a given
\[w = A_{1, \emptyset} A_{i_{1}, n_{1}} \ldots A_{i_{m}, n_{m}}\]
and associated 
\[ E(w) = \left( f_{\tilde{B}_{i_{0}n_{1}}} \circ f_{\tilde{B}_{i_{1}n_{2}}} \circ \ldots \circ f_{\tilde{B}_{i_{m-1}n_{m}}} \right) (\tilde{B}_{i_{m}}), \]
each of the functions $f_{\tilde{B}_{i_{s-1}n_{s}}}: \tilde{B}_{i_{s}} \rightarrow \tilde{B}_{i_{s-1}n_{s}}$ maps its domain onto a proper subball of its codomain. As a result, a word $w'$ is a proper prefix of $w$ if and only if
$E(w)$ is a proper subball of $E(w')$, and $w'$ is a maximal proper prefix of $w$ if and only if $E(w)$ is a maximal proper subball of $E(w')$.

Suppose now that $E(w_{1}) = E(w_{2})$, for some $w_{1}, w_{2} \in \mathcal{L}_{red}$, $w_{1} \neq w_{2}$. By the above discussion, we can assume that neither $w_{1}$ nor $w_{2}$ is a prefix of the
other. Let $w_{3}$ be the largest common prefix of $w_{1}$ and $w_{2}$. Let $E(w_{3}) = B$. Since $w_{1} = w_{3}w'$ and $w_{2} = w_{3}w''$ for non-trivial strings $w'$ and $w''$ with different initial symbols, $E(w_{1})$
and $E(w_{2})$ are disjoint proper subballs of $E(w_{3})$.
\end{proof}

\begin{definition}
Let $B$ be a ball in $X$. The \emph{address} of $B$ is the inverse image of $B$ under the evaluation
map $E: \mathcal{L}_{red} \rightarrow \mathcal{B}_{X}$, but with the initial stack symbol omitted. We write
$addr(B)$.
\end{definition}

\subsection{A string rewriting system based on $\mathcal{L}$}

In this subsection, we describe a string rewriting system with underlying vertex set $\mathcal{L}$.
The witness automata of Section \ref{section:witness} will use this rewrite system to perform partial calculations in $\sm_{X}$ on
their stacks.

\begin{definition} \label{def:bracketf}
Define
\[ [ \cdot ] : \bigcup_{(B_{1}, B_{2})} \mathrm{Sim}_{X}(B_{1}, B_{2}) \rightarrow \{ [f] \mid f \in \mathrm{Sim}_{X}(\tilde{B}_{j}, \tilde{B}_{j}), j \in \{ 1, \ldots, k \} \} \]
by the rule $[h] = [f_{B_{2}}^{-1} h f_{B_{1}}]$, for $h \in \mathrm{Sim}_{X}(B_{1}, B_{2})$. (We recall that $k$ is the number of $\sm_{X}$-classes of balls in $X$; see Convention \ref{conventionsetting}.) The union is over all pairs of balls $B_{1}, B_{2} \subseteq X$.

If $[h] = [f]$, where $f \in \mathrm{Sim}_{X}(\tilde{B}_{j}, \tilde{B}_{j})$ for some $j \in \{ 1, \ldots, k \}$, then $f$ is the \emph{standard representative} of $h$, 
and $[f]$ is the \emph{standard form} for $[h]$.
\end{definition}

\begin{remark}
If $f \in \sm_{X}(\tilde{B}_{j}, \tilde{B}_{j})$ for some $j \in \{ 1, \ldots, k \}$, then we sometimes confuse
$[f]$ with $f$ itself; this is justified by our choices in Convention \ref{conventionsetting}.
\end{remark}

\begin{definition} \label{def:rewriterules}
Define a string rewriting system $(\mathcal{L}, \rightarrow)$ as follows. The vertices are elements of the language $\mathcal{L}$. There are four families of rewriting rules:
\begin{enumerate}
\item (Restriction) 
\[ [f] A_{i_{s}, n_{s}} \rightarrow A_{i_{s}, f(n_{s})} [f_{\mid \tilde{B}_{i_{s-1}}n_{s}}],\]
where $[f]$ is a standard form; i.e., $f \in \mathrm{Sim}_{X} (\tilde{B}_{i_{s-1}}, \tilde{B}_{i_{s-1}})$;
\item (Group multiplication)
\[ [f_{1}] [f_{2}] \rightarrow [f_{1} \circ f_{2}], \]
where $f_{1}, f_{2} \in \mathrm{Sim}_{X}(\tilde{B}_{j}, \tilde{B}_{j})$, for some $j \in \{ 1, \ldots, k \}$;
\item (Absorption)
\[ [f] \# \rightarrow \#, \]
for arbitrary $[f]$;
\item (Identities)
\[ [id_{\tilde{B}_{j}}] \rightarrow \emptyset, \]
for $j = 1, \ldots, k$. 
\end{enumerate}
\end{definition}

\begin{remark}
We note that the total number of the above rules is finite, since there are only finitely many $\sm_{X}$-classes
of balls.
\end{remark}

\begin{proposition}
The string rewriting system $(\mathcal{L}, \rightarrow)$ is locally confluent and terminating. Each reduced element of $\mathcal{L}$ is in $\mathcal{L}_{red}$. The function $E$ is constant on equivalence classes
modulo $\rightarrow$.
\end{proposition}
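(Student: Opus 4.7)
The plan is to verify the four assertions separately, combining termination and local confluence via Newman's theorem to get completeness. For \emph{termination}, I would assign to each word $w\in\mathcal{L}$ the pair $(\alpha(w),\beta(w))\in\mathbb{N}\times\mathbb{N}$, where $\alpha(w)$ is the total number of bracket symbols $[f]$ appearing in $w$, and $\beta(w)$ is the sum, over all bracket symbols $[f]$ in $w$, of the number of $A$-symbols strictly to the right of $[f]$. Rules (2), (3), and (4) each strictly decrease $\alpha$. Rule (1) leaves $\alpha$ unchanged but strictly decreases $\beta$, since the bracket $[f]$ migrates past one $A$-symbol to the right (losing that $A$ from its right-count), while every other bracket's right-count is unchanged. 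The lexicographic order on $\mathbb{N}\times\mathbb{N}$ is well-founded, so no infinite rewrite sequence is possible.

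For \emph{local confluence}, I would enumerate the critical pairs arising from overlaps among the left-hand sides $[f]A_{i,n}$, $[f_1][f_2]$, $[f]\#$, and $[\id]$. The relevant overlaps are $[f_1][f_2]A_{i,n}$ (rules 1 vs.\ 2), $[f_1][f_2][f_3]$ (rule 2 with itself), $[f_1][f_2]\#$ (rules 2 vs.\ 3), $[\id]A_{i,n}$, $[f][\id]$ or $[\id][f]$, and $[\id]\#$ (overlaps involving rule 4). In each case the two possible reductions converge to a common descendant after at most one or two additional steps, using only the composition, restriction, and identity axioms of Definition \ref{def:fin sim struct} together with the definition of the standard form $[\cdot]$. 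For instance, $[f_1][f_2]A_{i_s,n_s}$ reduces either way to $A_{i_s,(f_1f_2)(n_s)}\bigl[(f_1\circ f_2)|_{\tilde{B}_{i_{s-1}n_s}}\bigr]$, because $f_1|_{\cdot}\circ f_2|_{\cdot}=(f_1\circ f_2)|_{\cdot}$ is forced by the restriction axiom.

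The third claim is immediate: if $w\in\mathcal{L}$ contains any bracket symbol $[f]$, then the symbol immediately following $[f]$ in $w$ is either another bracket (triggering rule 2, or rule 4 if one of them is $[\id]$), an $A$-symbol (triggering rule 1), or $\#$ (triggering rule 3). So reduced words contain no brackets at all, which is precisely the definition of $\mathcal{L}_{red}$. For \emph{$E$-invariance}, I would check each rule preserves the evaluation: rule (2) is associativity of composition; rule (4) is that $\id$ contributes trivially; rule (3) uses that a standard bracket $[f]$ with $f\in\sm_X(\tilde{B}_{i_m},\tilde{B}_{i_m})$ appearing just before $\#$ surjectively maps $\tilde{B}_{i_m}$ onto itself, so it can be deleted without changing the final image. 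For rule (1), unfolding the standard form reveals that the net effect on the evaluation composition is to replace $f\circ f_{\tilde{B}_{i_{s-1},n_s}}$ by $f_{\tilde{B}_{i_{s-1},f(n_s)}}\circ(f_{\tilde{B}_{i_{s-1},f(n_s)}}^{-1}\circ f|_{\tilde{B}_{i_{s-1},n_s}}\circ f_{\tilde{B}_{i_{s-1},n_s}})$, an equality that collapses to $f|_{\tilde{B}_{i_{s-1},n_s}}\circ f_{\tilde{B}_{i_{s-1},n_s}}=f\circ f_{\tilde{B}_{i_{s-1},n_s}}$ on the nose.

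The main obstacle is the local confluence verification: no individual case is deep, but there are enough overlaps to handle, and the bookkeeping between the $(i,n)$-indexing of balls and the conjugation built into the standard form $[\cdot]$ is easy to miscount. Once confluence is in hand, the remaining two assertions are short, and uniqueness of reduced forms from completeness delivers an immediately usable normal form for the stack language.
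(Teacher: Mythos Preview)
Your proposal is correct and follows essentially the same route as the paper: the paper, like you, declares termination and the description of reduced words to be clear, isolates the overlap $[f_1][f_2]A_{i_s,n_s}$ between rules (1) and (2) as the only nontrivial critical pair, and checks $E$-invariance rule by rule with rule (1) as the substantive case. Your version simply supplies more detail (an explicit ranking function for termination, the full list of critical overlaps, and an algebraic unfolding of the standard form where the paper draws a commutative diagram), but the argument is the same.
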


\begin{proof}
It is clear that each reduced element of $\mathcal{L}$ is in $\mathcal{L}_{red}$, and that $(\mathcal{L}, \rightarrow)$ is terminating. Local confluence of
$(\mathcal{L}, \rightarrow)$ is clear, except for one case, which we will now consider.

Suppose that $w \in \mathcal{L}$ contains a substring of the form $[f][g]A_{i_{s}n_{s}}$. We can apply two different overlapping rewrite rules to $w$, one sending
$[f][g]A_{i_{s}n_{s}}$ to $[f \circ g]A_{i_{s}n_{s}}$, and the other sending $[f][g] A_{i_{s}n_{s}}$ to
$[f]A_{i_{s}g(n_{s})} [ g_{\mid \tilde{B}_{i_{s-1}n_{s}}}]$. We need to show that $[f \circ g]A_{i_{s}n_{s}}$ and $[f]A_{i_{s}g(n_{s})} [ g_{\mid \tilde{B}_{i_{s-1}n_{s}}}]$
flow to a common string. Note that 
\[ [f \circ g]A_{i_{s}n_{s}} \rightarrow A_{i_{s}f(g(n_{s}))} [(f \circ g)_{\mid \tilde{B}_{i_{s-1}n_{s}}}], \]
and
\[ [f] A_{i_{s}g(n_{s})}[ g_{\mid \tilde{B}_{i_{s-1}n_{s}}}] \rightarrow A_{i_{s}f(g(n_{s}))} [f_{\mid \tilde{B}_{i_{s-1}g(n_{s})}}] [g_{\mid \tilde{B}_{i_{s-1}n_{s}}}]. \]
It therefore suffices to demonstrate that the maps $[(f \circ g)_{\mid \tilde{B}_{i_{s-1}n_{s}}}]$ and $[f_{\mid \tilde{B}_{i_{s-1}g(n_{s})}}] [g_{\mid \tilde{B}_{i_{s-1}n_{s}}}]$
are equal. But this follows from the commutativity of the following diagram:
\[ \xymatrix{
\tilde{B}_{i_{s}} \ar[d] \ar[r]^{[g_{\mid}]} &  \tilde{B}_{i_{s}} \ar[d] \ar[r]^{[f_{\mid}]} & \tilde{B}_{i_{s}} \ar[d]\\ 
\tilde{B}_{i_{s-1}n_{s}} \ar[r]^{g} & g(\tilde{B}_{i_{s-1}n_{s}}) \ar[r]^{f} & (f \circ g)(\tilde{B}_{i_{s-1}n_{s}}), }\]
where the vertical arrows are the canonical identifications from Convention \ref{conventionsetting} (e.g., the first vertical arrow is $f_{\tilde{B}_{i_{s-1}n_{s}}}$). It now follows that 
$(\mathcal{L}, \rightarrow)$ is locally confluent and terminating.

We now prove that $E$ is constant on the equivalence classes modulo $\rightarrow$. It is clear that applications of rules (2)-(4) do not change the value of $E$; we check that (1) also does not change
the value of $E$. Suppose we are given 
\[ \xymatrix{ \tilde{B}_{i_{m}} \ar[r]^{I_{m}} & \tilde{B}_{i_{m-1}} \ar[r]^{I_{m-1}} & \ldots \ar[r] & \tilde{B}_{i_{1}} \ar[r]^{I_{1}} & \tilde{B}_{i_{0}} = X},\]
where each $I_{j} = f_{w_{m-1}} \circ f_{\tilde{B}_{i_{m-1}n_{m}}}$, and $(I_{1} \circ \ldots \circ I_{m})(\tilde{B}_{i_{m}})$ is therefore $E(w)$, for $w \in \mathcal{L}$ in the form given in Definition
\ref{languageL}. We pick a particular $I_{\alpha} = f_{w_{\alpha - 1}} \circ f_{\tilde{B}_{i_{\alpha-1}n_{\alpha}}}$, for some $\alpha \in \{ 1, \ldots, m \}$. We note that the map $I_{\alpha}$ corresponds to the substring
$w_{\alpha-1} A_{i_{\alpha}n_{\alpha}}$ of $w$. We may assume that $w_{\alpha - 1}$ has length $1$ (after applying rewriting rules of the form (2)); we write $[f]$ in place of $f_{w_{\alpha - 1}}$, where $[f]$ is in standard form.
The result of applying a rewrite rule of type (1) to $w_{\alpha-1} A_{i_{\alpha}n_{\alpha}}$ is the string
$A_{i_{\alpha}f(n_{\alpha})} [f_{\mid \tilde{B}_{i_{\alpha -1} n_{\alpha}}}]$. The latter string corresponds to the map 
\[ I'_{\alpha} = f_{\tilde{B}_{i_{\alpha-1}f(n_{\alpha})}} \circ [f_{\mid \tilde{B}_{i_{\alpha - 1}}n_{\alpha}}], \]
so we must show that $I'_{\alpha} = I_{\alpha}$. We consider the commutative diagram 
\[ \xymatrix{
\tilde{B}_{i_{\alpha}} \ar[d]^{[f_{\mid}]} \ar[r] &  \tilde{B}_{i_{\alpha - 1}n_{\alpha}} \ar[d]^{f_{\mid \tilde{B}_{i_{\alpha -1}n_{\alpha}}}} \ar[r] & \tilde{B}_{i_{\alpha - 1}} \ar[d]^{f} \\ 
\tilde{B}_{i_{\alpha}} \ar[r] &  \tilde{B}_{i_{\alpha - 1}f(n_{\alpha})} \ar[r] & \tilde{B}_{i_{\alpha - 1}}, } \]
where the leftmost horizontal arrows are the canonical identifications of Convention \ref{conventionsetting} and the rightmost horizontal arrows are inclusions. If we follow the arrows in this rectangle from the upper left corner, down the left side, and across the bottom, the resulting map is $I'_{\alpha}$; if we follow the arrows along the top and right side, the resulting map is $I_{\alpha}$. This proves that $I'_{\alpha} = I_{\alpha}$, as required.
\end{proof}

\subsection{The Action of $\mathrm{Sim}_{X}$ on $\mathcal{L}$}

\begin{definition} \label{defprevious}
Let $f \in \sm_{X}(B', B'')$, where $B'$, $B''$ are arbitrary balls in $X$. Suppose that $addr(B') = \hat{w}$ and
$addr(B'') = \tilde{w}$, where 
\[ \hat{w} = A_{1, \emptyset} A_{i_{1}n_{1}} \ldots A_{i_{m}n_{m}}\# \quad \textrm{and} \quad \tilde{w} = A_{1, \emptyset}A_{j_{1}\ell_{1}} \ldots A_{j_{t}\ell_{t}} \#. \]
Let $w \in \mathcal{L}$.  For a word $w \in \mathcal{L}$, define a partial function $\phi_{f}: \mathcal{L} \rightarrow \mathcal{L}$ by the rule 
\[\phi_{f}(w) = \tilde{w}[f] w' \]
 if $w$ has $\hat{w}$ as a prefix, i.e., if $w = \hat{w}w'$ for some string $w'$. Otherwise, $\phi_{f}(w)$ is undefined.
\end{definition}

\begin{proposition} \label{prop:Ef=fE}
The expression $\phi_{f}(w)$ is defined if and only if $addr(B')$ is a prefix of $E(w)$. If $\phi_{f}(w)$ is defined, then
\[ E(\phi_{f}(w)) = f(E(w)). \]
\end{proposition}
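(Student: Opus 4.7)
The plan is to verify both claims by direct unfolding of the definitions, using the compositional structure of the evaluation map $E$. For the definedness statement, the partial function $\phi_f$ is defined on $w$ precisely when $w = \hat{w}w'$ for some suffix $w'$, i.e., precisely when $\hat{w} = \mathrm{addr}(B')$ appears as an initial segment of $w$; this is immediate from Definition~\ref{defprevious} and is the intended reading of the phrase ``$\mathrm{addr}(B')$ is a prefix of $E(w)$'' (the equivalence with $E(w) \subseteq B'$ follows from the factorization established below).

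For the identity $E(\phi_f(w)) = f(E(w))$, write $w = \hat{w}w'$ and expand $w$ in the general form of Definition~\ref{languageL} as
\[ w = A_{1,\emptyset} A_{i_{1},n_{1}} \cdots A_{i_{m},n_{m}}\, w_m\, A_{i_{m+1}, n_{m+1}}\, w_{m+1} \cdots A_{i_{p}, n_{p}}\, w_p \#, \]
with the group-element slots $w_0, \ldots, w_{m-1}$ empty because $\hat{w} \in \mathcal{L}_{red}$. The composition defining $E(w)$ then splits as an outer block
\[ F_{\mathrm{out}} := f_{\tilde{B}_{1n_{1}}} \circ f_{\tilde{B}_{i_{1}n_{2}}} \circ \cdots \circ f_{\tilde{B}_{i_{m-1}n_{m}}}, \]
coming entirely from $\hat{w}$, and an inner expression
\[ Y := \bigl(f_{w_m} \circ f_{\tilde{B}_{i_m n_{m+1}}} \circ f_{w_{m+1}} \circ \cdots \circ f_{\tilde{B}_{i_{p-1}n_p}}\bigr)(\tilde{B}_{i_p}) \subseteq \tilde{B}_{i_m}, \]
coming from $w'$. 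The key observation is that exactly the argument used in the surjectivity half of the proof that $E\colon\mathcal{L}_{red} \to \mathcal{B}_X$ is a bijection---the commutative-up-to-images diagram built from the maps $I_j = f_{\tilde{B}_{i_{j-1}n_j}}$---shows $F_{\mathrm{out}}|_{\tilde{B}_{i_m}} = f_{B'}$. Consequently $E(w) = f_{B'}(Y)$.

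Now apply the same decomposition to $\phi_f(w) = \tilde{w}[f]w'$. Since $\sm_X(B', B'') \neq \emptyset$, the $\sm_X$-classes of $B'$ and $B''$ agree, forcing $\tilde{B}_{j_t} = \tilde{B}_{i_m}$, and the outer block associated with $\tilde{w}$ restricts on $\tilde{B}_{j_t}$ to $f_{B''}$ by the same argument. The group-element slot immediately after $\tilde{w}$ is $[f]w_m$, while the remaining tail $A_{i_{m+1}, n_{m+1}} w_{m+1} \cdots w_p \#$ is identical to the corresponding tail of $w$. Hence the inner evaluation produces $\tilde{f}(Y)$, where $\tilde{f} \in \sm_X(\tilde{B}_{i_m}, \tilde{B}_{i_m})$ is the standard representative of $[f]$; by Definition~\ref{def:bracketf}, $\tilde{f} = f_{B''}^{-1} \circ f \circ f_{B'}$. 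Assembling the pieces,
\[ E(\phi_f(w)) = f_{B''}\bigl(\tilde{f}(Y)\bigr) = \bigl(f_{B''} \circ f_{B''}^{-1} \circ f \circ f_{B'}\bigr)(Y) = f\bigl(f_{B'}(Y)\bigr) = f(E(w)), \]
as required. The main obstacle is purely notational bookkeeping---tracking how the $A$- and group-element slots realign after the concatenations $w = \hat{w}w'$ and $\phi_f(w) = \tilde{w}[f]w'$. Once the factorization $E(w) = f_{B'}(Y)$ is pinned down, the identity is forced by the definition of the standard representative of $[f]$.
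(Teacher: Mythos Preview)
Your argument has a genuine gap at the step you flag as the ``key observation.'' You claim that the commutative-up-to-images diagram from the surjectivity proof of the bijection $E\colon\mathcal{L}_{red}\to\mathcal{B}_X$ yields the equality of maps $F_{\mathrm{out}}|_{\tilde{B}_{i_m}} = f_{B'}$. It does not: as the paper explicitly warns, that diagram is \emph{not} guaranteed to commute in the usual sense---only the images agree. Concretely, both $F_{\mathrm{out}}$ and $f_{B'}$ lie in $\sm_X(\tilde{B}_{i_m},B')$, but when $\sm_X(\tilde{B}_{i_m},\tilde{B}_{i_m})$ is nontrivial these two similarities can differ by a nontrivial group element. (Indeed, the canonical maps $f_B$ of Convention~\ref{conventionsetting} are chosen independently for each ball, with no compatibility condition forcing $f_{B_{j-1}}|_{\tilde B_{i_{j-1}n_j}}\circ f_{\tilde B_{i_{j-1}n_j}}=f_{B_j}$.) Once $F_{\mathrm{out}}\neq f_{B'}$, your final cancellation $f_{B''}\circ f_{B''}^{-1}\circ f\circ f_{B'}(Y)=f(f_{B'}(Y))$ no longer matches $f(F_{\mathrm{out}}(Y))=f(E(w))$, and the identity fails to follow.

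The paper sidesteps this by never asserting equality of maps. For $w\in\mathcal{L}_{red}$ it builds a single large ladder diagram (with $f$ spliced into the middle) that commutes up to images, and reads off $E(\phi_f(w))$ and $f(E(w))$ as the images of $\tilde{B}_{i_u}$ along two different paths. The general case $w\in\mathcal{L}$ is then reduced to the reduced case using that rewriting leaves the prefix $\hat{w}$ untouched and that $E$ is constant on $\rightarrow$-classes. If you want to salvage your direct-computation approach, you would need to track the discrepancy $F_{\mathrm{out}}^{-1}\circ f_{B'}\in\sm_X(\tilde{B}_{i_m},\tilde{B}_{i_m})$ (and likewise for $B''$) and show these discrepancies cancel---but there is no reason they should, and in general they do not.
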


\begin{proof}
The first statement is straightforward. 

Assume first that $w \in \mathcal{L}_{red}$. We have 
$w = A_{1, \emptyset}A_{i_{1}n_{1}} \ldots A_{i_{m}n_{m}} A_{i_{m+1}n_{m+1}} \ldots A_{i_{u}n_{u}} \#$.
If we write $I_{v}$ in place of $f_{\tilde{B}_{i_{v-1}n_{v}}}$ for $v \in \{ 1, \ldots, u \}$, then
$E(w) = (I_{1} \circ \ldots \circ I_{u})(\tilde{B}_{i_{u}})$.
If 
\[ B_{u} \subseteq B_{u-1} \subseteq \ldots \subseteq B_{1} \subseteq X \]
is the sequence of all balls containing $B_{u} = E(w)$ (so that each ball is necessarily a maximal proper subball in the next), we have
\[ \xymatrix{
\tilde{B}_{i_{u}} \ar[r]^{I_{u}} \ar[d] & \tilde{B}_{i_{u-1}} \ar[d] \ar[r]^{I_{u-1}} & \tilde{B}_{i_{u-2}} \ar[d] \ar[r]^{I_{u-2}} & \ldots \ar[r]^{I_{m+1}}& \tilde{B}_{i_{m}} \ar[d] \\ 
B_{u} \ar[r] & B_{u-1} \ar[r] & B_{u-2} \ar[r] & \ldots  \ar[r] & B',} \]
where the bottom horizontal arrows are inclusions, the vertical arrows are the canonical maps, and the diagram commutes up to images. Concatenating diagrams, we have
\[ \xymatrix{
\tilde{B}_{i_{u}} \ar[r]^{I_{u}} \ar[d] &  \ldots \ar[r]^{I_{m+1}}& \tilde{B}_{i_{m}} \ar[d] \ar[r]^{[f]} & \tilde{B}_{j_{t}} \ar[r]^{I'_{t}} \ar[d]& \ldots \ar[r]^{I'_{1}} & X \ar[d]^{f_{X} = \id_{X}}\\ 
B_{u} \ar[r] &  \ldots  \ar[r] & B' \ar[r]^{f} & B'' \ar[r] & \ldots \ar[r] & X,} \]
where the left rectangle is the previous diagram, the middle square commutes, and the right rectangle defines $E(\tilde{w})$ (i.e, the bottom horizontal arrows are inclusions, the vertical arrows are the canonical identifications,
and the maps $I'_{\beta}$ ($\beta \in \{ 1, \ldots, t \}$) are the ones from the definition of $E(\tilde{w})$). In particular, the entire diagram commutes up to images.

Next, we note that if we follow the arrows from $\tilde{B}_{i_{u}}$ along the top of the diagram, and down the right side, then the image of the corresponding composition is exactly
$E(\phi_{f}(w))$, by definition. The image of $\tilde{B}_{i_{u}}$ as we trace the left side and bottom of the diagram is
$(f \circ f_{B_{u}})(\tilde{B}_{i_{u}}) = f(B_{u}) = f(E(w))$. This proves the Proposition in the case that $w \in \mathcal{L}_{red}$.

Now we assume only that $w \in \mathcal{L}$ and $\hat{w}$ is a prefix of $w$. We let $w_{red}$ denote the (unique) reduced element in the equivalence class of $w$ modulo $\rightarrow$.   We note that, as we rewrite $w$, all
of the reductions are made to a suffix that does not include any part of the prefix $\hat{w}$, since $\hat{w}$ contains no symbols of the form $[f]$. It follows, in particular, that $w = \hat{w} w'$ and $w_{red} = \hat{w} w''$, and that $w''$ is the reduced form of $w'$. Applying $\phi_{f}$ to 
$w$ and $w_{red}$, we get
\[ \phi_{f}(w) = \tilde{w}[f]w' \quad \textrm \quad \phi_{f}(w_{red}) = \tilde{w}[f]w''. \]
It follows that
\[ \phi_{f}(w) = \tilde{w}[f]w' \rightarrow \tilde{w}[f]w'' = \phi_{f}(w_{red}). \]
Using the fact that $E$ is constant on equivalence classes modulo $\rightarrow$, and the fact that $E(\phi_{f}(w)) = f(E(w))$ for reduced words $w$, we see that
\[ E(\phi_{f}(w)) = E(\phi_{f}(w_{red})) = f(E(w_{red})) = f(E(w)).\]
 \end{proof}

\section{Witness Automata} \label{section:witness}

\begin{definition} \label{def:witness}
Let $B_{1}, B_{2} \subseteq X$ be metric balls. We now define a PDA, called the \emph{$(B_{1}, B_{2})$-witness automaton}.
There are four states: $L$ (the initial state, or \emph{loading} state), $R$ (the \emph{ready} state), $C$ (the \emph{cleaning} state), and $E$ (the \emph{eject} state). The directed edges are as follows:
\begin{enumerate}
\item Two types of directed edges lead away from $L$. The first type is a loop at $L$ having the label 
$(\epsilon, \epsilon, A_{i,n})$ ($i$ and $n$ range over all possibilities: $i \in \{ 1, \ldots, k \}$ and $n \in \{ 1, \ldots, \ell \}$, where $k$ and $\ell$ are as in Convention \ref{conventionsetting}.) There is just one edge of the second type: it leads to the ready state $R$. Its label
is $(\epsilon, \epsilon, addr(B_{1}))$. 
\item Let $s \in S$, and let $B$ be a region for $s$. By definition, $s_{\mid B} = f$, for some
$f \in \sm_{X}(B, f(B))$. We create a directed edge from $R$ to $C$ with the label 
\[(s, addr(B), addr(f(B))[f]); \]
there is one such edge for each $s \in S$ and
region $B$ for $s$.
\item The cleaning state $C$ is the initial vertex for three kinds of edges. First, we note that there is obviously a uniform bound $K$ on the lengths of the words in $\{ addr(B) \}$, where the $addr(B)$ are the middle coordinates of the labels of edges leading from the ready state $R$. For each unreduced word $w \in \Sigma^{\ast}$ that occurs as a prefix of length less than or equal to $K+1$ to 
a word in $\mathcal{L}$, we add a directed loop at $C$ with label $(\epsilon, w, r(w))$. These are the first type of edges.
There are two additional edges: the first is labelled $(\epsilon, \epsilon, \epsilon)$, and leads from $C$ back to $R$.
The second is labelled $(\epsilon, addr(B_{2}), \epsilon)$, and leads from $C$ to $E$.
\item The edges leading away from state $E$ are all of the same type. They are loops with label
$(\epsilon, A, \epsilon)$, where $A$ is an arbitrary symbol from the alphabet $\Sigma$, including the initial stack
symbol, $\#$.
\end{enumerate}
\end{definition}

\begin{remark}
With a bit more care, it is possible to specify edges leading away from the loading state $L$ in such 
a way that it is impossible to arrive in the state $R$ with anything other than a valid word of
$\mathcal{L}_{red}$ written on the stack; we will assume that this extra care has been taken, leaving details
to the reader.
\end{remark}

\begin{proposition} \label{prop:witness}
 For any pair of balls $B_{1}$, $B_{2} \subseteq X$,
the language 
\[ \mathcal{L}_{(B_{1}, B_{2})} = \{ w \in S^{\ast} \mid w(B_{1}) \cap B_{2} \neq \emptyset \} \]
is accepted by the $(B_{1}, B_{2})$-witness automaton. In particular, $\mathcal{L}_{(B_{1}, B_{2})}$ is (non-deterministic) context-free.
\end{proposition}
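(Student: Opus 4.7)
The plan is to prove the set equality $\mathcal{L}_{(B_1,B_2)} = \mathcal{L}_{\Gamma}$ in two directions, with a unifying theme: throughout any valid run, the current stack contents evaluate under $E$ to a specific ball of $X$, and we track this ball as the input is processed.

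For the containment $\mathcal{L}_{\Gamma} \subseteq \mathcal{L}_{(B_1,B_2)}$, I would argue as follows. By the remark following Definition \ref{def:witness}, any accepting run enters state $R$ for the first time with $addr(\mathcal{B}_{0})\#$ on the stack for some ball $\mathcal{B}_{0} \subseteq B_{1}$. Then by induction on the number of completed cycles $R \to C \to \cdots \to R$, after reading the input letters $s_1, \ldots, s_j$ the stack (after cleaning) evaluates under $E$ to $s_j \cdots s_1(\mathcal{B}_0)$. The key input is Proposition \ref{prop:Ef=fE}: the $R\to C$ edge labelled $(s_j, addr(B), addr(f(B))[f])$ fires only when the current stack ball $\mathcal{B}$ lies in the region $B$, and in that case its effect on the stack is exactly $\phi_f$, which by Proposition \ref{prop:Ef=fE} sends $E(\text{stack})=\mathcal{B}$ to $f(\mathcal{B})=s_j(\mathcal{B})$. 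The cleaning loops at $C$ preserve $E$ because they apply rewrite rules of $(\mathcal{L},\rightarrow)$, and $E$ is constant on equivalence classes. Finally, the edge $C\to E$ requires $addr(B_2)$ to be a prefix of the current stack, which forces $s_n\cdots s_1(\mathcal{B}_0)\subseteq B_2$, and in particular $s_n\cdots s_1(B_1)\cap B_2 \neq \emptyset$.

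For the reverse containment, given $w = s_n \ldots s_1$ with $s_n \cdots s_1(B_1) \cap B_2 \neq \emptyset$, I pick $x\in B_1$ with $s_n\cdots s_1(x)\in B_2$ and produce a ball neighborhood $\mathcal{B}_0$ of $x$ contained in $B_1$ satisfying: (a) for each $j \in \{1,\ldots,n\}$, the ball $s_{j-1}\cdots s_1(\mathcal{B}_0)$ lies in a single region of $s_j$; and (b) $s_n\cdots s_1(\mathcal{B}_0)\subseteq B_2$. The existence of such $\mathcal{B}_0$ mirrors the argument in Sublemma \ref{sublemma}: there is a uniform bound on dilation factors of the similarities defining the generators and a uniform lower bound $D$ on the diameters of balls that must lie in a single region of any $s_j$, so any sufficiently small ball neighborhood of $x$ achieves (a); one shrinks further, using openness of $B_2$, to achieve (b). With this $\mathcal{B}_0$ in hand, I specify an accepting run: the loading loops push the suffix of $addr(\mathcal{B}_0)$ past $addr(B_1)$ in reverse order, then the edge to $R$ pushes $addr(B_1)$ itself, yielding stack $addr(\mathcal{B}_0)\#$. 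For each $j$, fire the $R\to C$ edge using the region $B^{(j)}$ of $s_j$ that contains $s_{j-1}\cdots s_1(\mathcal{B}_0)$; apply the cleaning loops at $C$ to reduce the stack back to $\mathcal{L}_{red}$ (possible because $(\mathcal{L},\rightarrow)$ is terminating and confluent, and every reduced element lies in $\mathcal{L}_{red}$). At step $n$, since $s_n\cdots s_1(\mathcal{B}_0)\subseteq B_2$, the reduced stack has $addr(B_2)$ as a prefix, so the $C\to E$ edge fires; the loops at $E$ then pop the entire stack, giving acceptance by empty stack.

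The principal technical obstacle is the joint construction of $\mathcal{B}_0$ satisfying (a) and (b) in the reverse direction: this requires an iterative shrinking argument of the sort used in Sublemma \ref{sublemma}, carried out in a uniform way across all $n$ generators. A secondary subtlety is confirming that the cleaning loops at $C$ can always reduce the stack enough for the next $R\to C$ transition (equivalently, enough to expose $addr(B_2)$ at step $n$); this reduces to verifying that the one-step rewriting moves allowed at $C$ suffice to implement termination of $(\mathcal{L},\rightarrow)$ near the top of the stack, which is built into the $K+1$-prefix design of those loops.
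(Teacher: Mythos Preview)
Your proposal is correct and follows essentially the same approach as the paper: both directions hinge on the invariant that the stack contents evaluate under $E$ to the current image $s_{j}\cdots s_{1}(\mathcal{B}_{0})$, using Proposition~\ref{prop:Ef=fE} for the $R\to C$ transitions and the $E$-invariance of the rewrite rules for the cleaning loops. The only organizational difference is that the paper handles the direction $\mathcal{L}_{\Gamma}\subseteq\mathcal{L}_{(B_{1},B_{2})}$ by contrapositive (showing a word not in $\mathcal{L}_{(B_{1},B_{2})}$ cannot be accepted), whereas you argue it directly via the invariant; the underlying content is identical.
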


\begin{proof}
Let $w \in \mathcal{L}_{(B_{1}, B_{2})}$. We will prove that $w$ is accepted by the $(B_{1}, B_{2})$-witness automaton.

We regard $w = s_{1} \ldots s_{n}$ as an element of $G$. By continuity of $w$, there is some ball $B_{3} \subseteq B_{1}$ such that
$w(B_{3}) \subseteq B_{2}$. We may furthermore assume (as in Sublemma \ref{sublemma}(1)) that $s_{j} \ldots s_{n} ( B_{3})$ lies inside a region for $s_{j-1}$, for $j = 2, \ldots, n+1$.

Our automaton begins in state $L$, with $\#$ written on its stack. It begins by writing the address of $B_{3}$ on its stack, and (in the process) moving to state $R$. Since $B_{3}$ lies inside a region $D_{s_{n}}$ for $s_{n}$ by our assumptions, it follows that the address for $D_{s_{n}}$ is a prefix of the address for $B_{3}$. Let 
$f \in \sm_{X}(D_{s_{n}}, f(D_{s_{n}}))$ satisfy $s_{n \mid D_{s_{n}}} = f$. It follows 
that we are permitted to follow the directed edge labelled $(s_{n}, \ad(D_{s_{n}}), \ad(f(D_{s_{n}}))[f])$ (and in fact can follow no other) to state $C$. We note that, after doing so, the stack value of the path is $\phi_{f}(\ad(B_{3}))$. It follows, 
in particular, that
\[ E(\phi_{f}(\ad(B_{3}))) = f(E(\ad(B_{3}))) = s_{n}(E(\ad(B_{3}))) = s_{n}(B_{3}), \]
where the first equality is due to Proposition \ref{prop:Ef=fE}, the second is due to the equality
$f_{\mid B_{3}} = s_{n \mid B_{3}}$, and the third is by the definitions of $E$ and $\ad$. It follows that the stack value is a word in the language $\mathcal{L}$ whose reduced form in $\mathcal{L}_{red}$ is the address of $s_{n}(B_{3})$.

Next, beginning at state $C$, we repeatedly apply all possible reductions to prefixes of length $K+1$, using the directed loops at $C$. The effect of doing this is to gather all letters of the form $[f]$ at the end of the prefix (in the $(K+1)$st position at worst; the symbols $[f]$ drop out entirely if the empty stack symbol becomes visible to the automaton).
After doing this, we follow the directed edge labelled $(\epsilon, \epsilon, \epsilon)$ back to the ready state $R$.
Note that the stack is ``clean" -- there are no symbols of the form $[f]$ among the first $K$ symbols on the stack, and,
in view of the fact that $s_{n}(B_{3})$ lies inside a region for $s_{n-1}$, we can (as above) follow a unique directed
edge back to the state $C$.

The process repeats. Eventually the automaton winds up in state $C$ with a word $w \in \mathcal{L}$ on the stack satisfying
\[ E(w) = s_{1}\ldots s_{n}(B_{3}), \]
and nothing left on the input tape. We again apply the cleaning procedure as described above, resulting in a word $w'$
which still evaluates to $s_{1} \ldots s_{n}(B_{3})$, but now has a prefix of length $K$ that is free of the symbols $[f]$.
(If the word $w'$ has total length less than $K$, then $w'$ is entirely free of the symbols $[f]$.) In view of the fact, 
that $(s_{1} \ldots s_{n}(B_{3}) \subseteq B_{2}$ by our assumption, it now follows that the address of $B_{2}$
is a prefix of $w'$. We may therefore follow the arrow labelled $(\epsilon, \ad(B_{2}), \epsilon)$ to the eject state $E$,
where the automaton can completely unload its stack using the directed loops at $E$. Since the entire input tape has
been read and the stack is now empty, the automaton accepts $w$.

Now let us suppose that $w = s_{1} \ldots s_{n} \notin \mathcal{L}_{(B_{1}, B_{2})}$. We must show that the automaton cannot accept $w$.
The automaton is forced to begin by loading the address of an (unknown) subball $B_{3}$ of $B_{1}$ on its stack. After doing this, it is in the ready state $R$. Assuming that the automaton has at least (and, therefore, exactly) one edge to 
follow from $R$, it arrives in state $C$ with a word $w'$ written on its stack, such that $E(w')$ is $s_{n}(B_{3})$. We can 
then assume that the automaton follows the cleaning procedure sketched above. (Not doing so would only make the automaton less likely to accept $w$.) At this point, the automaton can move back to the ready state, or (if applicable)
to the eject state. However, assuming that $n > 1$, moving to the eject state will cause the automaton to fail, since, 
from $E$, there is no longer any opportunity to read the input tape. If $n=1$ (i.e., if $w = s_{n}$), then $s_{n}(B_{3})
\not \subseteq B_{2}$, so that the address of $B_{2}$ is not a prefix of the address for $s_{n}(B_{3})$, and therefore the
directed edge from $C$ to $E$ cannot be crossed. 

We may therefore assume that the automaton moves back and forth between the ready and cleaning states, ultimately
ending in the cleaning state $C$ with a word $w'$ on the stack, satisfying
\[ E(w') = s_{1} \ldots s_{n}(B_{3}), \]
and no letters on the input tape.
We may assume, moreover, that $w'$ has no symbol of the form $[f]$ among its first $K$ entries. Now, since
$s_{1} \ldots s_{n}(B_{3}) \not \subseteq B_{2}$, the address for $B_{2}$ is not a prefix of the address for $w'$, it
is not possible to follow the directed edge into $E$. The automaton's only move is to follow the arrow labelled
$(\epsilon, \epsilon, \epsilon)$ back to $R$, where it gets stuck. It follows that the automaton cannot accept $w$.
 \end{proof}

\section{Proof of the Main Theorem} \label{section:end}
\begin{proof}[Proof of Main Theorem]
By Proposition \ref{prop:testpartition}, there is a finite test partition $\mathcal{P}$ for $G$. We let
$\mathcal{P} = \{ B_{1}, \ldots, B_{\alpha} \}$, where each of the $B_{i}$ is a metric ball. 

Consider the language 
\[ \hat{\mathcal{L}} = \{ w \in S^{\ast} \mid w(B_{i}) \cap B_{j} \neq \emptyset, \,  i, j \in \{ 1, \ldots, \alpha \}, \, i \neq j \} 
= \bigcup_{i \neq j} \mathcal{L}_{(B_{i}, B_{j})}. \]
By Proposition \ref{prop:witness}, and because a finite union of context-free languages is context-free, $\hat{\mathcal{L}}$ is context-free. 

For any language $\mathcal{L}$, we let $\mathcal{L}^{\circ}$ denote the cyclic shift of $\mathcal{L}$. That is,
\[ \mathcal{L}^{\circ} = \{ w_{2}w_{1} \in S^{\ast} \mid w_{1}w_{2} \in \mathcal{L}; \, w_{1}, w_{2} \in S^{\ast} \}. \]
A theorem of \cite{Holt} says that the cyclic shift of a context-free language is context-free. It follows that
$\hat{\mathcal{L}}^{\circ}$ is context-free.

Finally, we claim that $\mathrm{CoWP}_{S}(G) = \hat{\mathcal{L}}^{\circ}$. The reverse direction follows from the
(obvious) fact that $\hat{\mathcal{L}} \subseteq \mathrm{CoWP}_{S}(G)$, and from the fact that
the co-word problem is closed under the cyclic shift. Now suppose that 
$w = s_{1} \ldots s_{n} \in \mathrm{CoWP}_{S}(G)$. Since $\mathcal{P}$ is a test partition, we must have a ball
$B_{i} \in \mathcal{P}$ such that $s_{\beta} \ldots s_{n} s_{1} \ldots s_{\beta -1} (B_{i}) \neq B_{i}$.  It follows 
easily that either $s_{\beta} \ldots s_{n} s_{1} \ldots s_{\beta -1} \in \mathcal{L}_{(B_{i}, B_{j})}$ or
$s_{\beta} \ldots s_{n} s_{1} \ldots s_{\beta -1} \in \mathcal{L}_{(B_{j}, B_{i})}$, for some $i \neq j$. This implies
that $w  = s_{1} \ldots s_{n} \in \hat{\mathcal{L}}^{\circ}$.

\end{proof}

\bibliographystyle{plain}
\bibliography{biblio}

\end{document}